\newtheorem{defn}{Definition}[section]
\newtheorem{prop}[defn]{Proposition}
\definecolor{blue}{rgb}{0,0,1}
\definecolor{red}{rgb}{1,0,0}
\definecolor{green}{rgb}{0,.6,.2}
\definecolor{purple}{rgb}{1,0,1}
\long\def\red#1\endred{\textcolor{red}{#1}}
\long\def\blue#1\endblue{\textcolor{blue}{#1}}
\long\def\purple#1\endpurple{\textcolor{purple}{ #1}}
\long\def\green#1\endgreen{\textcolor{green}{#1}}
\newtheorem{lem}[defn]{Lemma}
\newtheorem{thm}[defn]{Theorem}
\newtheorem*{conj*}{Conjecture}
\newtheorem{cor}[defn]{Corollary}
\newcommand {\ZZ}{{\mathbb Z}}
\newcommand {\C}{{\mathbb C}}
\newcommand {\G}{{\Gamma}}
\newcommand {\Q}{{\mathbb Q}}
\newcommand {\R}{{\mathbb R}}
\newcommand {\g}{{\gamma}}
\newcommand {\HH}{{\mathfrak  H}}
\newcommand {\MH}{\mathcal{M}}
\newcommand {\M}{{\mathcal M}}
\newcommand {\HM}{{\mathcal H \MH^!}}
\newcommand {\MI}{{\mathcal{MI}^!_1}}
\def\mod{\operatorname{mod}}
\def\Im{\operatorname{Im}}
\title{}
\author{ \\
}
\begin{document}

\title{Period functions associated to real-analytic modular forms}
\author
{Nikolaos Diamantis, Joshua Drewitt 
\\
(University of Nottingham)
}

\maketitle

\section{Introduction}
Period polynomials are fundamental objects associated to cusp forms which characterise the ``critical values" of their L-functions. They have been studied from various standpoints since at least the 70s and have been used to prove many important results including Manin's Periods Theorem. We state it in a slightly weakened form to make the comparison with one of our theorems easier: 
\begin{prop} \label{Manin} \cite{M} Let $f$ be a cusp form of weight $k$ for $\G:=$SL$_2(\ZZ)$ which is a normalised eigenfunction of the Hecke operators. Let $L^{*}_f$ denote the ``completed" L-function of $f$. Then there are $\omega_1(f), \omega_2(f) \in \C$ such that
$$L^*_f(j) \in \omega_1(f)K_f+ \omega_2(f)K_f \qquad \text{for each $j \in \{2, k-2\}$}$$
where $K_f$ is the field generated by the Fourier coefficients of $f$. 
\end{prop}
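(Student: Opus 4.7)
The plan is to reduce the statement to the classical Eichler--Shimura theory of period polynomials. First, I would introduce
$$r_f(X) := \int_0^{i\infty} f(\tau)(X-\tau)^{k-2}\,d\tau,$$
expand $(X-\tau)^{k-2}$ binomially, and recognise each integral $\int_0^{i\infty} f(\tau)\tau^n\,d\tau$ as a nonzero explicit constant times $L^*_f(n+1)$. Hence the coefficients of $r_f(X)$ are, up to nonzero factors in $\Q[i]$, the critical values $L^*_f(j)$ for $j=1,\ldots,k-1$, and the theorem reduces to showing that every coefficient of $r_f$ lies in $\omega_1(f)K_f+\omega_2(f)K_f$ for a suitable pair $\omega_1(f),\omega_2(f)\in\C$.

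Next I would invoke the Eichler cocycle relations: $r_f$ satisfies $r_f(X)+X^{k-2}r_f(-1/X)=0$ together with the three-term relation coming from the parabolic generator of $\G$. These cut out a finite-dimensional space $W$ of polynomials of degree at most $k-2$, which decomposes as $W=W^+\oplus W^-$ under the involution $X\mapsto -X$. The Eichler--Shimura isomorphism identifies each of $W^\pm$ (modulo coboundaries) with a Hecke-stable space on which the Hecke algebra acts compatibly with its action on $S_k(\G)$; since $f$ is a normalised Hecke eigenform, its eigenspace inside each of $W^\pm$ is one-dimensional over $\C$.

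In the final step I would exploit the fact that the Hecke operators $T_n$ preserve the natural $\Q$-structure on $W^\pm$ given by polynomials with rational coefficients. Since all Hecke eigenvalues $a_n(f)$ of $f$ lie in $K_f$, the operators $T_n-a_n(f)$ restrict to $W^\pm\otimes K_f$, and their common kernel there is a one-dimensional $K_f$-line. Choosing a nonzero $P_f^\pm(X)$ in that line and writing $r_f^\pm(X)=\omega_\pm(f)\,P_f^\pm(X)$ yields scalars $\omega_\pm(f)\in\C$; setting $\omega_1(f)=\omega_+(f)$ and $\omega_2(f)=\omega_-(f)$ then gives the conclusion for every critical value simultaneously, and in particular for $j=2$ and $j=k-2$.

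The principal obstacle is establishing the Hecke equivariance of the Eichler--Shimura correspondence together with its compatibility with the $\Q$-structure on $W^\pm$; this requires an explicit description of the $T_n$-action on period polynomials (e.g.\ via matrix slash operators on cocycles) and a verification that it commutes with the period map up to coboundaries. Once these standard but nontrivial ingredients are in hand, the descent from $\Q$-rationality of the period-polynomial module to $K_f$-rationality of the $f$-isotypic eigenvectors follows by multiplicity one.
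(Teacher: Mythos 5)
The paper does not actually prove this proposition: it is quoted, in deliberately weakened form, from Manin \cite{M}, and serves only as a template for Theorem \ref{ManinWp}, the analogue for weakly holomorphic cusp forms, which is what the paper proves. Your sketch is the standard Eichler--Shimura proof of Manin's theorem, and in outline it is sound; it also runs closely parallel to the paper's argument for Theorem \ref{ManinWp}, where the role of your spaces $W^{\pm}$ with their rational structures is played by $H^{1,\pm}_{\mathrm{par}}(\G, P_{k-2}(\Q))$, and the Hecke-equivariant rational comparison isomorphism is supplied by the algebraic de Rham theory of \cite{BH} together with \cite{G}, rather than by classical Eichler--Shimura theory. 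So the two routes differ only in which rationality input is invoked; yours is the classical one appropriate to holomorphic cusp forms.

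One step needs more care than your sketch gives it. On the even side, the space of cocycle polynomials contains the coboundary $X^{k-2}-1$, so the $f$-isotypic piece of $W^{+}$ is one-dimensional only after passing to cohomology; at the level of actual polynomials the most you get is $r_f^{+}=\omega_{+}(f)P_f^{+}(X)+c_f\,(X^{k-2}-1)$ with $c_f\in\C$ a priori unrelated to $\omega_{\pm}(f)$. This is harmless for the statement as given, because adding a multiple of $X^{k-2}-1$ perturbs only the constant and leading coefficients, i.e.\ only $L^{*}_f(1)$ and $L^{*}_f(k-1)$ --- which is exactly why the proposition (and Theorem \ref{ManinWp}, see the last step of its proof, where coefficients of $z^{j-1}$ are compared only for $j\neq 1,k-1$) is restricted to $2\le j\le k-2$. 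But your closing claim that the argument ``gives the conclusion for every critical value simultaneously'' overstates what this method delivers without additional input to control $c_f$.
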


The analogue of period polynomials for Maass cusp forms proved to be harder to construct. It was introduced and studied by Lewis and Zagier in the late 90s (\cite{L, LZ}). This \emph{period function} found important uses in a variety of contexts, though not in arithmetic applications. 

On the other hand, F. Brown recently \cite{BrI, BrII, BrIII} initiated the study of a new class of automorphic objects, called \emph{real-analytic modular forms of weight $(r, s)$} whose behaviour is, in a sense to become clearer later, a hybrid of the behaviour of holomorphic forms and that of Maass forms. We denote their space by $\MH^!_{r, s}.$ He proved interesting algebraicity and rationality results for Fourier coefficients of elements of $\MH^!_{r, s}$ which affirmed their arithmetic nature. The space of real-analytic modular forms contains several previously studied classes of modular objects, including that of \emph{weakly holomorphic forms}.
\medskip

In this paper we investigate other fundamental arithmetic aspects of real-analytic modular forms, including their L-functions. 
The precise definition is stated in Sect. \ref{Lfun}  but in the special case of ``cuspidal" $f \in \MH^!_{r, s}$,  it is given by
\begin{multline*}\label{Lf}L_f^*(w):=\int_1^{\infty}\tilde f(it) t^{w-1}dt + \int_1^{-\infty}\mathring{f}(it) t^{w-1}dt
+ \\
i^{r-s} \left ( \int_1^{\infty} \tilde f(it)t^{r+s-w-1}dt +\int_1^{-\infty} \mathring{f}(it) t^{r+s-w-1}dt
\right ) 
\end{multline*}
where $\tilde f$ and $\mathring{f}$ stand for the ``pieces" of $f$ that are, respectively, exponentially decreasing and exponentially increasing at infinity.

\medskip

It is not obvious how to define appropriately period functions in $\mathcal M^!_{r, s}$.
This is not surprising because period functions are normally supposed to reflect arithmeticity and the full space $\mathcal M^!_{r, s}$ is too large to be of arithmetic nature in its entirety.  However, here we associate period functions to elements of a special subclass of $\mathcal M^!_{r, s}$, namely the subclass of  \emph{modular iterated integrals (of length $1$)} of \cite{BrI}. 
The definition (Sect. \ref{PerMaass}) requires some preparation, so we will illustrate the construction here under a simplifying assumption that does not hold in general.

\medskip

Fix $s, r \in \mathbb N$ of the same parity. For an integer $k  \le s, r$, $\zeta \in \C$ and a smooth $g: \HH \cup \bar \HH \to \C$, where $\HH$ (resp. $\bar \HH$) denote the upper (resp. lower) half-plane, we define the $1$-differential form
\begin{equation*}\label{omega}
\omega_k(g;\zeta)=
y^{k-1} \left ( \frac{\partial g}{\partial z}+rg(z) \right ) (\zeta-\bar z)^{s-k}(\zeta-z)^{r-k}dz+(s-k)y^{k-1} g(z)(\zeta-\bar z)^{s-k-1}(\zeta-z)^{r-k+1}d\bar z
\end{equation*}
where $z=x+iy$.

Let now $F \in \MH^!_{r, s}$ which is a modular integral (of length $1$). As shown in \cite{BrIII}, it can be uniquely decomposed as
$$F=F_0+\dots+F_{\text{\rm min}(r, s)}$$
where $F_k \in \MH^!_{r, s}$ is an eigenfunction of the Laplacian with eigenvalue $(k-1)(r+s-k).$ Our simplifying assumption, for the purposes of the Introduction, is that each $F_k$ has a vanishing ``constant term" (see Sect. \ref{basics}). We then set
$$v_k(\zeta)=\int_{\zeta}^{i \infty}\omega_k(\tilde F_k; \zeta )
+
\int_{\zeta}^{-i \infty}\omega_k(\mathring{F}_k; \zeta ).$$
We can now state
\begin{defn}Let $F \in \MH^!_{r, s}$ which is a modular integral. With the above notation, the \emph{period function $P(\zeta)$} of $F$ is given by
$$P(\zeta)=\sum_{k=0}^{\text{\rm min}(r, s)}\left ( v_k(-1/\zeta)\zeta^{r+s-2k}-v_k(\zeta) \right ).$$
\end{defn}
The period function induces a cocycle which is consistent with the Eichler cocycle of standard modular forms. Specifically, for each fixed $k \in \{0, \dots, \text{\rm min}(r, s)\}$, let $\G:= \text{SL}_2(\ZZ) $ act on the space $P_{r+s-2k}(\C)$ of polynomials of degree $\le r+s-2k$, via:
$$(P \underset{2k-r-s, 0}{||} \gamma)(\zeta):=P(\gamma \zeta) (c\zeta+d)^{r+s-2k} \qquad \text{for all $P \in P_{r+s-2k}(\C), \left ( \begin{smallmatrix} * & * \\ c & d \end{smallmatrix} \right ) \in \text{SL}_2(\ZZ) $}.$$
(The reason for the unusual notation $||$ will become clearer later). With this action, the \emph{Eichler cohomology group} is the group $H^1(\text{SL}_2(\ZZ),  P_{r+s-2k}(\C))$. 

For each fixed $k \in \{0, \dots, \text{\rm min}(r, s)\}$, let the map $\sigma_k:\text{SL}_2(\ZZ)  \to P_{r+s-2k}(\C)$ be given by
$$\sigma_k(\gamma)=(v_k \underset{2k-r-s}{||}\gamma)-v_k.$$
We have $P(\zeta)=\sigma_1(S)(\zeta)+\dots+\sigma_{\text{\rm min}(r, s)}(S)(\zeta).$ Further, define
$\tilde \sigma_k:\text{SL}_2(\ZZ)  \to P_{r+s-2k}(\C)$ as
$$\tilde \sigma_k(\gamma)=\int_i^{\gamma^{-1} i} \omega_k(F_k; \zeta ).$$
By comparison, the Eichler cocycle of a weight $k$ cusp form $f$ for $\G$ is $\sigma(\gamma)=\int_i^{\gamma^{-1} i} \omega(f; \zeta)$, with $\omega(f; \zeta)=f(z)(z-\zeta)^{k-2}dz.$
The cocycle $\sigma_k$ induced by our $P(\zeta)$ is consistent with the Eichler cocycle $\tilde \sigma_k$ in the following sense:
\begin{prop} The maps $\sigma_k, \tilde \sigma_k$ define $1$-cocycles in $P_{r+s-2k}(\C).$ Their classes in the Eichler cohomology $H^1(\text{SL}_2(\ZZ),  P_{r+s-2k}(\C))$ coincide.
\end{prop}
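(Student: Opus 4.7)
The plan is to rewrite $v_k$ as the sum of a polynomial in $\zeta$ of degree at most $r+s-2k$ plus a ``partial period'' whose $||$-coboundary is precisely $\tilde\sigma_k$. Fix the base point $i\in\HH$ and split each defining integral of $v_k$ by path-additivity, setting
\[
u_k(\zeta):=\int_\zeta^{i}\omega_k(\tilde F_k;\zeta)+\int_\zeta^{i}\omega_k(\mathring F_k;\zeta),\qquad p_k(\zeta):=\int_i^{i\infty}\omega_k(\tilde F_k;\zeta)+\int_i^{-i\infty}\omega_k(\mathring F_k;\zeta),
\]
so that $v_k=u_k+p_k$. Inspection of the defining formula for $\omega_k(g;\zeta)$ shows that both the $dz$- and $d\bar z$-pieces are polynomial of degree exactly $r+s-2k$ in $\zeta$; since the $z$-integration in $p_k$ is along paths independent of $\zeta$, we obtain $p_k\in P_{r+s-2k}(\C)$. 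Under the stated simplifying assumption $F_k=\tilde F_k+\mathring F_k$ and by $\C$-linearity of $\omega_k$ in $g$, one has $u_k(\zeta)=\int_\zeta^i\omega_k(F_k;\zeta)$.

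The cocycle property of $\sigma_k$ is formal: $\sigma_k(\g)=v_k||\g-v_k$ satisfies $\sigma_k(\g_1\g_2)=\sigma_k(\g_1)||\g_2+\sigma_k(\g_2)$ for every right $||$-action on functions, so once $\sigma_k$ is shown to take polynomial values it is automatically a cocycle. For $\tilde\sigma_k$, I split
\[
\tilde\sigma_k(\g_1\g_2)(\zeta)=\int_i^{\g_2^{-1}i}\omega_k(F_k;\zeta)+\int_{\g_2^{-1}i}^{\g_2^{-1}\g_1^{-1}i}\omega_k(F_k;\zeta)
\]
and substitute $z=\g_2^{-1}w$ in the second integral. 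Using the $(r,s)$-modularity of $F_k$, $\operatorname{Im}(\g_2z)=y|c_2z+d_2|^{-2}$, and the identity $\g_2z-\g_2\zeta=(z-\zeta)[(c_2z+d_2)(c_2\zeta+d_2)]^{-1}$ together with its conjugate, the form $\omega_k(F_k;\zeta)$ pulls back to $(c_2\zeta+d_2)^{r+s-2k}\omega_k(F_k;\g_2\zeta)$. The second integral therefore equals $\tilde\sigma_k(\g_1)(\g_2\zeta)(c_2\zeta+d_2)^{r+s-2k}=(\tilde\sigma_k(\g_1)||\g_2)(\zeta)$, which is the required 1-cocycle identity and, in particular, forces $\tilde\sigma_k(\g)\in P_{r+s-2k}(\C)$.

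To identify the cohomology classes, I establish the key identity
\[
(u_k||\g)(\zeta)-u_k(\zeta)=\tilde\sigma_k(\g)(\zeta),\qquad \g\in\text{SL}_2(\ZZ).
\]
Starting from $(c\zeta+d)^{r+s-2k}\,u_k(\g\zeta)=(c\zeta+d)^{r+s-2k}\int_{\g\zeta}^i\omega_k(F_k;\g\zeta)$ and substituting $z=\g w$, the transformation law from the previous paragraph (applied with $\g_2=\g^{-1}$, so that the prefactor becomes $(c\zeta+d)^{r+s-2k}$) converts the integrand into $\omega_k(F_k;\zeta)|_w$ and the endpoints into $(\zeta,\g^{-1}i)$; splitting this path at $i$ yields $u_k(\zeta)+\tilde\sigma_k(\g)(\zeta)$. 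Combined with $v_k=u_k+p_k$, this gives
\[
\sigma_k(\g)=(u_k||\g-u_k)+(p_k||\g-p_k)=\tilde\sigma_k(\g)+(p_k||\g-p_k),
\]
which simultaneously proves $\sigma_k(\g)\in P_{r+s-2k}(\C)$ (completing the cocycle statement) and $[\sigma_k]=[\tilde\sigma_k]$ in $H^1(\text{SL}_2(\ZZ),P_{r+s-2k}(\C))$, since $p_k\in P_{r+s-2k}(\C)$. The main technical obstacle is the $\g$-transformation law for $\omega_k$: the $dz$- and $d\bar z$-pieces must transform coherently, which requires careful use of the $(r,s)$-modularity of $F_k$, the compatibility of the shifted operator $\partial_z g+rg$ with the action of $\g$, and the interaction of the cross-factors $(\zeta-z)$, $(\zeta-\bar z)$ and $y^{k-1}$ under the change of variable; once that pointwise identity is verified, the remainder of the argument is essentially bookkeeping.
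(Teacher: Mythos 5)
Your skeleton is the same as the paper's actual proof (Prop.~\ref{sigma}): split $v_k$ at the base point $i$ into $u_k(\zeta)=\int_\zeta^i\omega_k(F_k;\zeta)$ plus tail integrals from $i$ to $\pm i\infty$ that are manifestly in $P_{r+s-2k}(\C)$, show $(u_k\,||\,\g)-u_k=\tilde\sigma_k(\g)$ from an equivariance property of $\omega_k$, and conclude that $\sigma_k$ and $\tilde\sigma_k$ differ by the coboundary of the polynomial tail. The paper obtains the equivariance by recognising $\omega_k(g;\zeta)$ as the Maass--Selberg form $\eta_{r-s}(y^{(r+s)/2}g,R_{s-r,\mu_k}(\cdot,\zeta))$ and quoting M\"uhlenbruch's transformation laws (Lemmas~\ref{MSprop}(1) and~\ref{Rprop}(2)); you propose a direct pointwise verification, and the ingredients you name (modularity of $F_k$, $\g z-\g\zeta=(z-\zeta)/(j(\g,z)j(\g,\zeta))$, $\Im\g z=y\,|j(\g,z)|^{-2}$) are the correct ones, so that step, though deferred, is sound.

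The genuine gap is that every path manipulation you make silently assumes the forms being integrated are closed: splitting $\int_\zeta^{i\infty}$ at $i$ after fixing a path, re-routing the transported path through $i$ after the substitution $z=\g w$ in $u_k\,||\,\g$, and identifying $\int_{\g_2^{-1}i}^{\g_2^{-1}\g_1^{-1}i}$ with a translate of $\tilde\sigma_k(\g_1)$ all require path-independence (homotopy invariance) of the integrals, not mere concatenation. This is where the paper invests real work. Closedness of $\omega_k(F_k;\zeta)$ comes from $F_k$ being a $\Delta_{r,s}$-eigenfunction, but your $u_k$ and $p_k$ integrate $\omega_k(\tilde F_k;\zeta)$ and $\omega_k(\mathring F_k;\zeta)$ \emph{separately}, and their closedness requires knowing that $\tilde F_k$ and $\mathring F_k$ are individually eigenfunctions of $\Delta_{r,s}$ with the same eigenvalue $\lambda_k$ --- a nontrivial growth argument carried out in Prop.~\ref{Fk} and packaged in Prop.~\ref{MSmodint} --- and one must additionally check that $\omega_k(\mathring F_k;\zeta)$ stays closed on $\bar\HH$, where the path to $-i\infty$ lives (the paper does this in Prop.~5.1 by a separate analytic-continuation-of-coefficients argument). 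Without these facts, $u_k$, $p_k$ and the identity $(u_k\,||\,\g)-u_k=\tilde\sigma_k(\g)$ are not yet well defined. Supplying a closedness lemma for the individual pieces (or invoking the eigenfunction decomposition of Lemma~\ref{HM!}) is what is needed to complete your argument; the cohomological bookkeeping that follows is correct.
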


The connection with our $L$-function is provided by the following theorem (see Th. \ref{Lperiod}).
\begin{thm} \label{Lv} Assume that $r \equiv s \mod 4$. Let $P'(\zeta)$ denote the polynomial obtained from $P(\zeta)$ upon removing its constant and leading terms. Then,
$$P'(\zeta)=\sum_{k=0}^{\text{\rm min}(r, s)}\sum_{l=1}^{r+s-2k-1}a_{k, l} L_{F_k}^*(k+l)\zeta^l$$
for some explicit $a_{k, l} \in \Q(i).$
\end{thm}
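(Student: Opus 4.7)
The plan is to expand $\omega_k(g;\zeta)$ explicitly as a polynomial in $\zeta$ and match the resulting $1$-dimensional integrals along the imaginary axis against the defining Mellin integrals of $L^*_{F_k}$. First, using the binomial expansions of $(\zeta-\bar z)^{s-k}(\zeta-z)^{r-k}$ and $(\zeta-\bar z)^{s-k-1}(\zeta-z)^{r-k+1}$, I would write
$$\omega_k(g;\zeta)=\sum_{l=0}^{r+s-2k}\zeta^l\,\beta_{k,l}(g),$$
where each $\beta_{k,l}(g)$ is a $1$-form in $z,\bar z$ alone with explicit binomial coefficients multiplying $y^{k-1}z^a\bar z^b(\partial g/\partial z+rg)\,dz$ and $y^{k-1}z^{a'}\bar z^{b'}g\,d\bar z$. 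Substituting this into $v_k$ gives $v_k(\zeta)=\sum_l\zeta^l V_{k,l}(\zeta)$ with $V_{k,l}(\zeta)=\int_\zeta^{i\infty}\beta_{k,l}(\tilde F_k)+\int_\zeta^{-i\infty}\beta_{k,l}(\mathring F_k)$; a relabelling of the expansion yields the analogous expression for $v_k(-1/\zeta)\zeta^{r+s-2k}$ in terms of the same $V_{k,\cdot}(-1/\zeta)$.

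Next I would split each integral at the base point $\pm i$, writing $\int_\zeta^{\pm i\infty}=\int_\zeta^{\pm i}+\int_{\pm i}^{\pm i\infty}$. The \emph{tail} parts $T_{k,l}^{\pm}:=\int_{\pm i}^{\pm i\infty}\beta_{k,l}(\tilde F_k\text{ or }\mathring F_k)$ are independent of $\zeta$. On the vertical path $z=it$ one has $y=t$, $\bar z=-it$, $dz=i\,dt$, $d\bar z=-i\,dt$, so the polynomial coefficients of $\beta_{k,l}$ collapse into powers $t^{r+s-k-l-1}$ multiplied by $\tilde F_k(it)$ or $\mathring F_k(it)$, with explicit $\Q(i)$ prefactors from the binomial coefficients and powers of $i$; the $\partial g/\partial z$ piece is handled by integration by parts in $t$, the boundary contributions at $\pm i\infty$ vanishing because of the decay/growth profile of $\tilde F_k$ and $\mathring F_k$ and those at $\pm i$ being absorbed into the coboundary discussed below. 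The remaining $\zeta$-dependent pieces $\int_\zeta^{\pm i}\beta_{k,l}$ assemble, upon summation over $l$, into precisely the coboundary identifying $\sigma_k$ with the Eichler cocycle $\tilde\sigma_k$ guaranteed by the preceding consistency Proposition, and therefore cancel between $v_k(-1/\zeta)\zeta^{r+s-2k}$ and $v_k(\zeta)$ modulo a polynomial whose coefficients feed transparently into $a_{k,l}$.

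Finally I would combine the tails. The coefficient of $\zeta^l$ in $v_k(-1/\zeta)\zeta^{r+s-2k}$ is (up to an explicit sign) $T_{k,r+s-2k-l}^{\pm}$, which produces the Mellin integral with $t^{k+l-1}$, while the coefficient of $\zeta^l$ in $v_k(\zeta)$ produces $T_{k,l}^{\pm}$ with $t^{r+s-k-l-1}$; these are the dual exponents under the functional equation of $L^*_{F_k}$. The hypothesis $r\equiv s\pmod 4$ gives $i^{r-s}=1$, so the two pieces combine with matching signs into the full $L^*_{F_k}(k+l)$, and the rational $a_{k,l}\in\Q(i)$ are read off from the binomial prefactors of $\beta_{k,l}$. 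The extreme cases $l=0$ and $l=r+s-2k$ produce the constant and leading coefficients of $P(\zeta)$; here the Mellin exponent lies at an endpoint of the relevant strip, where separate regularisation (via the constant terms of $F_k$) is required, which is precisely why these two coefficients are discarded in passing to $P'(\zeta)$. The main obstacle I foresee is the bookkeeping of the $\partial g/\partial z$ piece: one has to verify that the boundary terms produced by integration by parts on the vertical rays either vanish at $\pm i\infty$ or feed exactly into the coboundary relating $\sigma_k$ to $\tilde\sigma_k$ at $\pm i$, tracking all powers of $i$ and binomial signs so that the surviving coefficient at $\zeta^l$ is the claimed $\Q(i)$-multiple of $L^*_{F_k}(k+l)$.
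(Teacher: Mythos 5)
Your plan is, in substance, the paper's own proof of the precise version (Th.~\ref{Lperiod}): there $\omega_k(g;\zeta)$ is identified with the Maass--Selberg form $\eta_{r-s}\bigl(y^{(r+s)/2}g, R_{s-r,\mu_k}(\cdot,\zeta)\bigr)$, the cocycle identity reduces $\sigma_k(S)$ to the tail integrals based at $i$, an integration by parts removes the $\partial/\partial z$ term up to boundary contributions at $i$ that cancel under $S-1$ precisely because $r\equiv s \pmod 4$, and the binomial expansion along the imaginary axis produces, in each $\zeta^l$-coefficient, the two dual exponents $t^{k+l-1}$ and $t^{r+s-k-l-1}$ with matching prefactors, hence a $\Q(i)$-multiple of $L^*_{F_k}(k+l)$; the exclusion of $l=0$ and $l=r+s-2k$ is exactly where the constant-term contributions (the poles of $L^*_{F_k}$ at $w=k$ and $w=r+s-k$) intrude. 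So the approach is the same and the outline is sound.

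Two bookkeeping points in your sketch would, as written, derail the computation, and both are resolved by the paper's choice of $i$ as the \emph{single} base point. First, the $\mathring F_k$ ray must be split at $z=i$ (i.e.\ $t=1$), not at $-i$: the integral $\int_1^{-\infty}\mathring F_k(it)\,t^{w-1}dt$ occurring in the definition of $L^*_{F_k}$ starts at $t=1$, so your tail $T^-_{k,l}=\int_{-i}^{-i\infty}$ differs from the required one by $\int_i^{-i}\omega_k(\mathring F_k;\zeta)$, a polynomial in $\zeta$ whose coefficients are finite-range integrals of $\mathring F_k$ and are \emph{not} multiples of $L^*_{F_k}(k+l)$; it cannot be left to ``feed transparently into $a_{k,l}$'' and must be reabsorbed into the tail. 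Second, the $\zeta$-dependent pieces $\int_\zeta^{i}$ do not drop out because they form a coboundary --- a coboundary evaluated at $S$ is generally a nonzero polynomial, which would again contaminate the coefficients. They drop out because, after recombining $\tilde F_k+\mathring F_k=F_k$ so that modularity of $F_k$ (not of its separate pieces) can be invoked, their image under the action of $S-1$ is the genuine Eichler cocycle value $\tilde\sigma_k(S)=\int_i^{S^{-1}i}\omega_k(F_k;\zeta)$, which vanishes identically since $Si=i$. With these two corrections your tail computation reproduces the paper's formula, including the explicit constants $\alpha_n$.
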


Finally, as an additional evidence that our definition of L-function is the``right" one, we prove the following:
\begin{thm} \label{ManinW} The analogue of Proposition \ref{Manin} for $f$ \emph{weakly holomorphic} holds.
\end{thm}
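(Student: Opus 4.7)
The plan is to adapt Manin's original argument to the weakly holomorphic setting, using Theorem \ref{Lv} to reduce the statement about $L$-values to one about the coefficients of a period polynomial.

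Let $f$ be a weakly holomorphic modular form of weight $k$ for $\G=\text{SL}_2(\ZZ)$; viewed as an element of $\MH^!_{k,0}$, $f$ is holomorphic and so consists of a single Laplace eigenpiece (only $F_0$ is nonzero). Assuming the weight satisfies the congruence hypothesis $k\equiv 0\pmod 4$ of Theorem \ref{Lv}, that theorem expresses $P'_f(\zeta)$ as a $\Q(i)$-linear combination of the completed values $L_f^*(j)$ with $j$ in the interior of the critical strip. Thus the analogue of Proposition \ref{Manin} becomes equivalent to placing the coefficients of $P'_f(\zeta)$ inside $\omega_1(f) K_f+\omega_2(f) K_f$ for suitable periods $\omega_i(f)\in\C$.

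The next step is to transfer the classical Eichler--Shimura--Manin theory to $f$. By the proposition preceding Theorem \ref{Lv}, the cocycle $\sigma_0$ attached to $f$ coincides in Eichler cohomology with the cocycle $\tilde\sigma_0(\gamma)=\int_i^{\gamma^{-1}i}\omega_0(f;\zeta)$, and the assignment $f\mapsto[\tilde\sigma_0]$ is Hecke equivariant by a standard change-of-variable computation on Hecke double cosets. Consequently, if $f$ is a normalised Hecke eigenform, then $[\tilde\sigma_0]$ is a Hecke eigenclass in $H^1(\G,P_{k-2}(\C))$ with eigenvalues in the field $K_f$ generated by the Fourier coefficients of $f$ (including those of the principal part).

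With Hecke equivariance in hand, the classical argument goes through. Split $P'_f=(P'_f)^{+}+(P'_f)^{-}$ into its even and odd parts; both remain cocycles, and the one-dimensionality of the $\pm$-components of the $f$-isotypic piece of $H^1(\G,P_{k-2}(\C))$ forces each $(P'_f)^{\pm}$ to lie in a one-dimensional $K_f$-subspace. Taking generators $\omega_1(f),\omega_2(f)$ of those lines yields $L_f^*(j)\in\omega_1(f)K_f+\omega_2(f)K_f$ for the interior critical $j$ covered by Theorem \ref{Lv}, and in particular for $j\in\{2,k-2\}$.

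The central obstacle is developing enough Hecke theory at the cocycle level for weakly holomorphic forms: one must verify that the integrals defining $\tilde\sigma_0(\gamma)$ converge (or admit a canonical regularisation) in the presence of the exponentially growing principal part of $f$ at the cusp, and that the resulting cohomology class is genuinely Hecke equivariant. A secondary concern is to account for the constant and leading terms of $P_f$ discarded in passing to $P'_f$: these encode the extremal values $j\in\{0,1,k-1,k\}$ outside the range of Manin's original statement, and one should verify that removing them is harmless for the values covered by Theorem \ref{ManinW}.
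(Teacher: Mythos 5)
Your outline reproduces the classical Manin skeleton (reduce the $L$-values to coefficients of a period polynomial, split into even and odd parts, use one-dimensionality of the $\pm$-isotypic lines over $K_f$), and that is indeed the shape of the paper's argument. But the step you yourself flag as ``the central obstacle'' --- developing Hecke theory and a rational structure at the cocycle level for weakly holomorphic forms --- is precisely the content of the paper's proof, and your proposal does not supply it. Two points make this a genuine gap rather than a deferred technicality. First, a weakly holomorphic cusp form is in general \emph{not} a normalised Hecke eigenform; by \cite{G} there are no Hecke eigenforms in $S^!_{k,\Q}$, and the operators are only well defined on the quotient $S^!_{k,\Q}/D^{k-1}M^!_{2-k,\Q}$. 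The correct hypothesis (as in Th.~\ref{ManinWp}) is that the \emph{class} of $f$ modulo $D^{k-1}M^!_{2-k}$ is an eigenclass, and the eigenspace one works with is the two-dimensional space of \cite{G}; your argument as written assumes an eigenform and a Hecke-equivariant map $f\mapsto[\tilde\sigma_0]$ obtained ``by a standard change-of-variable computation,'' which does not exist at the level of forms. Second, the assertion that the $\pm$-components of the $f$-isotypic piece are one-dimensional $K_f$-lines requires the comparison between the de Rham side $S^!_{k,\Q}/D^{k-1}M^!_{2-k,\Q}$ and the Betti side $H^1_{\mathrm{par}}(\Gamma,P_{k-2}(\Q))$ respecting rational structures; this is exactly Cor.~1.3 of \cite{BH} (together with \cite{BGKO} for surjectivity onto parabolic cohomology), and without it the rationality over $K_f$ of the eigenlines has no foundation.

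Two further corrections. The cocycle $\phi'(f)$ built from the regularised integrals is only canonical up to the coboundary $\psi_0(\gamma)=1\underset{2-k,0}{||}(\gamma-1)$, whose value at $S$ is $\zeta^{k-2}-1$; this contributes an unknown constant $c_f\in\C$ exactly to the leading and constant coefficients, which is \emph{why} the values $j=1$ and $j=k-1$ must be excluded --- it is not a harmless truncation to be ``verified,'' it is the source of the restriction in the statement. Finally, the paper does not route the proof through Th.~\ref{Lv} (which would impose $k\equiv 0\bmod 4$); it works directly with the explicit expression \eqref{rE} for $\phi'(f)(S)$ and a binomial expansion to identify the coefficient of $z^{j-1}$ with a $\Q[i]$-multiple of $L^*_f(j)$, which avoids any congruence condition on the weight.
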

This differs from the full Manin's Periods Theorem in that it does not say anything about the values at $1$ and $k-1$. 
As confirmed by numerical experiments, the lack of 
$K_f$-proportionality of $L^*_f(1)$ (and of  $L^*_f(k-1)$) with the other odd ``critical values" seems to be genuine and not just due to any incompleteness of our proof. 
  
The precise statement of Th. \ref{ManinW} is Th. \ref{ManinWp}. To prove it, we use an analogous identity to Th. \ref{Lv} and the algebraic de Rham theory of weakly holomorphic modular forms \cite{BH}. K. Bringmann has shown 
us how we can use an identity of \cite{BGKO} to deduce a statement which, combined with Th. \ref{Lv}, implies Th. \ref{ManinW}.

\bigskip

{\bf Acknowledgements.} We thank F. Brown for many helpful comments on the exposition and possible further directions of research, K. Bringmann for suggesting an alternative approach to Th. \ref{ManinW} and F. Str\"omberg for numerical tests of some of the results. 

\section{Real-analytic modular forms}\label{basics}
We start by recalling the definition of real-analytic modular forms.

Let $\G=$SL$_2(\ZZ)$ and set $S=\left ( \begin{smallmatrix} 0 & -1 \\ 1 & 0 \end{smallmatrix} \right )$, 
$T=\left ( \begin{smallmatrix} 1 & 1 \\ 0 & 1 \end{smallmatrix} \right )$ and 
$U=TS=\left ( \begin{smallmatrix} 1 & -1 \\ 1 & 0 \end{smallmatrix} \right ).$  Suppose that $r, s$ are integers of the same parity. For a smooth $g: \HH \to \C$ and $\g \in \G$ we define a function $f\underset{r, s}{||}  \g$ given by 
$$f \underset{r, s}{||} \g(z)=j(\g, z)^{-r} j(\g, \bar z)^{-s }f(\g z) \qquad \text{for all $z \in \HH$ }$$
where
$$j\left ( \left ( \begin{smallmatrix} * & * \\ c & d \end{smallmatrix} \right ), z \right ):=cz+d.$$
We extend the action to $\C[\G]$ by linearity.

 We call a real-analytic function $f: \HH \to \C$ \emph{a real-analytic modular form of weights $(r, s)$ for $\G$} if \newline
1. for all $\g \in \G$ and $z \in \HH$, we have $f \underset{r, s}{||} \g=f$, i.e.
$$f(\g z)=j(\g, z)^r j(\g, \bar z)^s f(z)  \qquad \text{for all $z \in \HH$. }$$
\noindent
2. for some $C>0$, $f(z)=O(e^{Cy})$ as $y \to \infty$, uniformly in $x$ (where $z=x+iy$). Further,
\begin{equation} \label{FE} f(z)=\sum_{|j| \le M}y^j\left ( \sum_{m, n \ge -N} a_{m, n}^{(j)} q^m \bar q^n \right ) \end{equation}
 for some $M, N \in \mathbb N,$ $a_{m, n}^{(j)} \in \C$. Here, $q=$exp$(2 \pi i z)$. 

We denote the space of real analytic modular forms of weights $(r, s)$ for $\G$ by $\MH^!_{r, s}.$
We set $\MH^!=\oplus_{r, s }\MH^!_{r, s}.$
This class of functions was introduced by F. Brown \cite{BrI, BrII, BrIII} whose initial motivation was related, on the one hand, to some non-holomorphic modular forms originating from iterated extensions of pure motives and with coefficients that are periods. On the other hand, he was motivated by open questions about modular graph functions appering in string perturbation theory.

The space $\MH^!$ contains or intersects various previously studied classes of important modular objects and the point of view we adopt here is to consider real-analytic forms as a unifying tool for those classes. For example, 
when $s=0$, an element $f$ of $\MH^!_{r, 0}$ which is holomorphic in $\HH$ is a standard weakly holomorphic modular form
of weight $r$ for $\G.$ We denote their space by $M_r^!$ and set $M^!=\oplus_r M^!_r.$ The space $M_r^!$ contains, of course, the space $M_r$ (resp. $S_r$) of classical modular (resp. cusp) forms of weight $r$.

When $r=0$ we are similarly led to the space $\bar M_r$ of weakly anti-holomorphic modular forms.

Another subspace is $\MH_{r, s}$, which is obtained upon imposing the condition that $a_{m, n}^{(k)}$ should vanish when $m$ or $n$ are negative. It was defined and studied in \cite{BrI}. Their direct sum over all $r, s$ is denoted by $\MH.$

The relation with Maass forms is more complicated. On the one hand, the definition of $\MH^!$ allows for forms which are not eigenfunctions of the Laplacian, but, on the other, it requires a more restrictive form of a Fourier expansion than that of Maass forms. We will exploit this relation in the sequel in order to define some of our main objects, and, in particular, we will see that constructions from the theory of Maass cusp forms will be the basis for period functions of certain elements of $\MH^!$.

The Fourier expansion \eqref{FE} can be uniquely decomposed into a sum of an ``principal part" $\mathring{f}$, an exponentially decaying part $\tilde f$ and the ``constant term" $f^0$ defined as follows:
\begin{align} \label{tilde} \tilde f (z) &:= \sum_{|j| \le M}y^j\left ( \sum_{\substack{m, n \ge -N \\ m+n>0}} a_{m, n}^{(j)} q^m \bar q^n \right ) 
\\
\label{0} f^0(z)&:= \sum_{|j| \le M}y^j\left ( \sum_{\substack{m, n \ge -N \\ m+n=0}} a_{m, n}^{(j)} q^m \bar q^n \right ) 
\qquad \text{and} \\
\mathring{f}(z)&:= f(z)-\tilde{f}(z)-f^0(z)
 \label{ring} 
\end{align} 
 
\subsection{Eigenforms for the Laplacian}
The Lie algebra $\mathfrak{sl}_2$ acts on $\MH^!$ via the Maass operators $\partial_{r}: \MH^!_{r, s} \to \MH^!_{r+1, s-1}$
and $\bar \partial_s: \MH^!_{r, s} \to \MH^!_{r-1, s+1}$ given by
$$\partial_r=2iy\frac{\partial}{\partial z}+r \qquad \text{and} \, \, \,  \bar \partial_{s}=-2iy\frac{\partial}{\partial \bar z}+s.$$
They induce bigraded derivations on $\MH^!$ denoted by $\partial$ and $\bar \partial$ respectively.
 
The Laplacian $\Delta_{r, s}: \MH^!_{r, s} \to \MH^!_{r, s}$ is defined by
$$\Delta_{r, s}=-\bar \partial_{ s-1} \partial_{r} +r(s-1)=-\partial_{r-1}\bar \partial_{s}+s(r-1).$$
It induces a bigraded operator  $\Delta$ of bidegree $(0, 0)$ on $\M^!$.

An operator which is essentially equivalent to $\Delta_{r, s}$ but which is more convenient for some computations in the sequel is
$$\Omega_k=-y^2\left ( \frac{\partial^2}{\partial x^2}+\frac{\partial^2}{\partial y^2} \right )+i k y \frac{\partial}{\partial x}.$$
When working with $\Omega_k$ the following version of the `stroke' operator will be more appropriate to work with than $\underset{r, s}{||}$.
Specifically, for $\gamma \in \G$ and $f: \mathfrak H \to \mathbb C$, the function $f|_k\g$ is defined by 
$$(f|_k\g)(z)=\left ( \frac{j(\g, z)}{|j(\g, z)|} \right ) ^{-k}f(\g z) \qquad {\text{for all $z \in \HH.$}}$$
We extend the action to $\C[\G]$ by linearity.
To move between the $\Delta$ to the $\Omega$ formalism 
 the following lemma will be useful:
\begin{lem}\label{trans} If,  for some $r, s \in \ZZ$ of the same parity, $F$ is an element of $\MH^!_{r, s}$ such that $\Delta_{r, s}F=\lambda F$ for some $\lambda \in \R$, then
$F_1:=y^{\frac{r+s}{2}}F$ satisfies
$$F_1|_{r-s} \g=F_1 \qquad \text{for all $\g \in \G$ and} \, \, \, \Omega_{r-s}F_1=\left ( \lambda+\left( \frac{r+s}{2} \right ) \left (1-\frac{r+s}{2}\right )\right ) F_1.$$ 
\end{lem}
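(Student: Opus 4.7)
The plan is to verify the two assertions by direct calculation, treating the transformation law and the eigenvalue identity separately. Both boil down to routine but delicate bookkeeping, so the strategy is to isolate the one nontrivial relation in each case and let the rest be mechanical.

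For the modular transformation law, the key input is the classical identity $\mathrm{Im}(\gamma z)=y/|j(\gamma,z)|^2$. Writing $F_1(\gamma z)=(\mathrm{Im}(\gamma z))^{(r+s)/2}F(\gamma z)$ and substituting, one gets a factor $|j(\gamma,z)|^{-(r+s)}$, which combines with the prefactor $(j(\gamma,z)/|j(\gamma,z)|)^{-(r-s)}$ in $F_1|_{r-s}\gamma$ to produce exactly $j(\gamma,z)^{-r}j(\gamma,\bar z)^{-s}$ (after using $|j(\gamma,z)|^{2s}=j(\gamma,z)^s j(\gamma,\bar z)^s$). Invoking $F\underset{r,s}{||}\gamma=F$ then collapses everything to $y^{(r+s)/2}F(z)=F_1(z)$, giving the first claim.

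For the eigenvalue identity, I would first rewrite both operators in a common set of variables. Using $\partial_x^{2}+\partial_y^{2}=4\partial_z\partial_{\bar z}$ and $\partial_x=\partial_z+\partial_{\bar z}$, the hyperbolic Laplace-type operator becomes
\[
\Omega_k=-4y^{2}\partial_z\partial_{\bar z}+iky\,\partial_z+iky\,\partial_{\bar z}.
\]
On the other hand, unpacking the Maass operators in the definition of $\Delta_{r,s}=-\bar\partial_{s-1}\partial_r+r(s-1)$ and using $\partial y/\partial z=-i/2$, $\partial y/\partial\bar z=i/2$, one finds the (cancellation of the $r(s-1)$ shift and) clean expression
\[
\Delta_{r,s}F \;=\; -4y^{2}F_{z\bar z}+2iry\,F_{\bar z}-2isy\,F_z.
\]
This is the identity that will recombine with the $\Omega$-computation below.

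Next I would apply $\Omega_{r-s}$ to $F_1=y^{\alpha}F$ with $\alpha=(r+s)/2$. Expanding $(y^{\alpha}F)_z$, $(y^{\alpha}F)_{\bar z}$ and $(y^{\alpha}F)_{z\bar z}$ produces, besides the terms $y^{\alpha}$ times derivatives of $F$, a quadratic correction $-\alpha(\alpha-1)y^{\alpha}F$ from the double derivative and two linear corrections whose $F$-coefficients cancel once one sets $k=r-s$. What survives is
\[
\Omega_{r-s}F_1 \;=\; -\alpha(\alpha-1)y^{\alpha}F+y^{\alpha}\bigl(-4y^{2}F_{z\bar z}+2iry\,F_{\bar z}-2isy\,F_z\bigr),
\]
and the parenthesis is exactly $\Delta_{r,s}F=\lambda F$ by hypothesis. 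Thus $\Omega_{r-s}F_1=\bigl(\lambda+\alpha(1-\alpha)\bigr)F_1$, as required. The only place where care is needed is tracking the constant shifts between the two normalizations (the $r(s-1)$ in $\Delta_{r,s}$ and the quadratic correction $\alpha(\alpha-1)$ coming from the conjugation by $y^{\alpha}$); once these are identified as the source of the eigenvalue shift, the rest is routine.
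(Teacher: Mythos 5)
Your proof is correct and follows essentially the same route as the paper's: a direct verification of the transformation law via $\mathrm{Im}(\gamma z)=y/|j(\gamma,z)|^2$ together with $|j(\gamma,z)|^2=j(\gamma,z)j(\gamma,\bar z)$, and a conjugation computation showing $\Omega_{r-s}(y^{(r+s)/2}F)=y^{(r+s)/2}\Delta_{r,s}F-\tfrac{r+s}{2}\bigl(\tfrac{r+s}{2}-1\bigr)y^{(r+s)/2}F$. The only cosmetic difference is that the paper organizes the ``easy computation'' through the operator identity $\Delta_{r,s}=\Omega_{r-s}-(r+s)y\,\partial_y$, whereas you expand everything directly in $z,\bar z$ coordinates; both amount to the same bookkeeping.
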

\begin{proof} We observe that 
$$\Delta_{r, s}=\Omega_{r-s}-(r+s)y \frac{\partial}{\partial y}$$
and that, for each smooth $f: \HH \to \C$,
$$f|_k\g=f \underset{\frac{k}{2}, -\frac{k}{2}}{||}\g \qquad \text{and} \, \, \left ( y^{\frac{r+s}{2}} f \right ) |_{r-s} \g= y^{\frac{r+s}{2}} \left (f  \underset{r, s}{||}\g \right ) \quad  \text{for all $\g \in \G$.}$$
An easy computation then implies the lemma. 
\end{proof}

For $\lambda \in \R,$ set
$$\HM(\lambda):=\text{Ker}(\Delta-\lambda:\M^! \to \M^!).$$
Also set $\HM:=\oplus_{\lambda} \HM(\lambda).$
The following lemma summarises some of the special features of the Fourier expansions of $f \in \HM(\lambda)$.
\begin{lem} \label{HM!}\cite{BrIII} Let $f \in \HM(\lambda) \cap \M^!_{r, s}$. Then,  $\lambda \in \ZZ$ and there is a $k_0 \in \ZZ$ such that $k_0<1-r-s-k_0$ and $\lambda=k_0(1-r-s-k_0).$  There are unique $f^h, f^a$ of the form 
\begin{align} f^h(z)&=\sum_{j=k_0}^{-s} y^j\sum_{\substack{m \ge -N \\ m \ne 0}}a_{m}^{(j)} q^m \label{fh}\\
f^a(z)&=\sum_{j=k_0}^{-r} y^j\sum_{\substack{m \ge -N' \\ m \ne 0}} b_{m}^{(j)} \bar q^m \label{fa}
\end{align} 
($a_{m}^{(j)},  b_{m}^{(j)} \in \C $ and $N, N' \in \mathbb N$) such that
$$f=f^0+f^h+f^a.$$
Furthermore, the constant term has the form $ f^0(z)= ay^{k_0}+by^{1-r-s-k_0}$ for some $a, b \in \mathbb C$. 

Finally, $f^h, f^a, y^{k_0}$ and $y^{1-r-s-k_0}$ are eigenfunctions of $\Delta_{r, s}$ with eigenvalue $\lambda$.
\end{lem}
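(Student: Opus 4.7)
My plan is to substitute the Fourier expansion \eqref{FE} into the eigenvalue equation $\Delta_{r,s}f=\lambda f$ and match coefficients of each monomial $y^j q^m \bar q^n$. Using the formulas $\partial_r=2iy\,\partial_z+r$ and $\bar\partial_{s-1}=-2iy\,\partial_{\bar z}+(s-1)$, a direct computation of $\Delta_{r,s}(y^j q^m \bar q^n)$ yields, after re-indexing, the three-term recursion
\begin{equation*}
-16\pi^2 mn\, a_{m,n}^{(j-2)} + 4\pi\bigl[m(j-1+s)+n(j-1+r)\bigr]\, a_{m,n}^{(j-1)} + c(j)\, a_{m,n}^{(j)} = 0,
\end{equation*}
where $c(j)=r(s-1)-(j+r)(j+s-1)-\lambda$. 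The quadratic $c(j)$ factors as $-(j-k_0)(j-k_1)$, so its (a priori possibly complex) roots satisfy $k_0+k_1=1-r-s$ and $k_0 k_1=\lambda$.

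Next I would eliminate every mixed Fourier mode with $mn\ne 0$. The coefficient $-16\pi^2 mn$ of $a_{m,n}^{(j-2)}$ is then nonzero, so together with the assumption $|j|\le M$, a descending induction on $j$ works: for $j>M+2$ the recursion reduces to $-16\pi^2 mn\,a_{m,n}^{(j-2)}=0$, and iterating downward forces every $a_{m,n}^{(j)}$ to vanish. Hence only the three families of modes $(n=0,m\ne 0)$, $(m=0,n\ne 0)$ and $(m=n=0)$ can contribute, and the uniqueness of the decomposition $f=f^0+f^h+f^a$ is immediate from the disjointness of their supports in $(m,n)$.

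For modes with $n=0$ and $m\ne 0$ the recursion collapses to $4\pi m(j-1+s)a_{m,0}^{(j-1)}+c(j)a_{m,0}^{(j)}=0$. Forward iteration kills $a_{m,0}^{(1-s)}$, because the factor $j-1+s$ vanishes at $j=1-s$, and hence every $a_{m,0}^{(j)}$ with $j\ge 1-s$, yielding the upper bound $-s$ in \eqref{fh}. Termination from below, via the inverse recursion $a_{m,0}^{(j-1)}\propto c(j)\,a_{m,0}^{(j)}$, demands that $c$ possess an integer root; the smallest such is $k_0$, forcing $\lambda=k_0(1-r-s-k_0)\in\ZZ$. The strict inequality $k_0<k_1$ follows because $k_0+k_1=1-r-s$ is odd (as $r,s$ share parity), which precludes equal integer roots. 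The case $m=0$, $n\ne 0$ is entirely symmetric and gives \eqref{fa} with upper bound $-r$; since the roots of $c$ depend only on $r,s,\lambda$, the same $k_0$ governs the lower limits in both series.

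Finally, for $m=n=0$ the recursion collapses to $c(j)a_{0,0}^{(j)}=0$, so $a_{0,0}^{(j)}$ vanishes unless $j\in\{k_0,\,1-r-s-k_0\}$, giving $f^0=ay^{k_0}+by^{1-r-s-k_0}$. That each of $f^h$, $f^a$, $y^{k_0}$, and $y^{1-r-s-k_0}$ is a $\lambda$-eigenfunction of $\Delta_{r,s}$ is immediate, since the recursion couples only coefficients with a common $(m,n)$. The technical subtlety I expect to have to address is the degenerate configurations in which $c(j)$ and the factor $j-1+s$ (or $j-1+r$) vanish at the same integer $j$: there the recursion alone does not determine the coefficient at that index, and one must invoke the finiteness $|j|\le M$ of the original expansion directly to conclude that the potentially free coefficient vanishes, so that the stated ranges remain sharp.
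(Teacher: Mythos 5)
Your proposal is correct in substance, but it takes a genuinely different route from the paper: the paper does not prove this lemma at all --- its entire ``proof'' is a citation of Lemma 4.3 of \cite{BrIII} and the remarks following it. What you supply is the self-contained computation behind that citation: substituting \eqref{FE} into $(\Delta_{r,s}-\lambda)f=0$ and, for each fixed mode $(m,n)$, reading off a three-term recursion in $j$. Your recursion and the factorisation $c(j)=-(j-k_0)(j-k_1)$ with $k_0+k_1=1-r-s$, $k_0k_1=\lambda$ check out against a direct computation with $\partial_r=2iy\partial_z+r$ and $\bar\partial_{s-1}=-2iy\partial_{\bar z}+(s-1)$, and the parity argument for $k_0\ne k_1$ and the decoupling argument for the final eigenfunction claims are both sound; what the paper's citation buys is brevity and deference to Brown's more general treatment, while your version buys a proof readable without leaving the paper. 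Two points to tidy up. First, the ``degenerate configuration'' you flag at the end is not actually an obstruction if you organise the upper-bound argument as a descending induction from $j=M$ rather than a forward iteration: to conclude $a^{(j-1)}_{m,0}=0$ from $a^{(j)}_{m,0}=0$ you only need $m(j-1+s)\ne0$, i.e.\ $j-1\ne-s$, and the value of $c(j)$ plays no role; the descent therefore kills every $a^{(j)}_{m,0}$ with $j>-s$ unconditionally (and symmetrically for $b^{(j)}_{n}$ with $j>-r$), which is exactly the ``invoke the finiteness of $|j|\le M$'' patch you anticipate needing. Second, for the assertion $\lambda\in\ZZ$ you should say explicitly that if $c$ had no integer root then the two-sided induction, together with the degenerate constant-term relation $c(j)a^{(j)}_{0,0}=0$, would force $f=0$; hence a nonzero eigenform guarantees an integer root, and the smaller of the two roots is the $k_0$ of the statement.
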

\begin{proof} Lemma 4.3 of \cite{BrIII} together with the remarks following it. 
\end{proof}

\subsection{Real analytic Eisenstein series.} \label{reES} An example of an element of $\MH^!_{r, s}$, and, indeed, of $\MH_{r, s}$, which is, in addition, an eigenform for the Laplacian is the \emph{real analytic Eisenstein series} $\mathcal E_{r, s}$, given for $r, s \in \mathbb N$ and $z \in \mathfrak H$ by $$\mathcal E_{r, s}(z)=\frac{1}{2}\sum_{\g \in B \backslash
\G} \frac{\Im(z)}{j(\gamma, z)^{r+1} j(\gamma, \bar z)^{s+1}}
$$
where $B$ is the subgroup of translations. This series converges absolutely and belongs to $\MH_{r,s}$
It further has a meromorphic continuation to the entire complex plane and is an eigenfuction of $\Delta$ with eigenvalue $-r-s.$

Its Fourier expansion has been computed explicitly in Th. 3.1. of \cite{DO1} and in Prop. 11.2.16 of \cite{CS}. We summarise it here and will see how it fits with Lemma \ref{HM!}. With the notation of that lemma,
\begin{multline}\label{FEis} \mathcal E_{r, s}(z)=\mathcal E^0_{r, s}(z)+\frac{ \pi^{\frac{r+s+2}{2}}}{\Gamma(1+r)\zeta(r+s+1)} \left \{ \sum_{j=-r-s}^{-s}y^j \sum_{m \ge 1} \frac{\sigma_{r+s+1}(m)\alpha^+_{-j-\frac{r+s}{2}}(4 \pi)^{j+\frac{r+s}{2}}}{m^{1-j}}q^m \right. \\+
\left. \sum_{j=-r-s}^{-r}y^j \sum_{m \ge 1} \frac{\sigma_{r+s+1}(m)\alpha^-_{-j-\frac{r+s}{2}}(4 \pi)^{j+\frac{r+s}{2}}}{m^{1-j}}\bar q^m  \right \}
\end{multline}
where
$$\alpha_j^{\pm}:=(-1)^j(j+\frac{|r-s|}{2})! \binom{s+\frac{r-s}{2}+\frac{|r-s|}{2}}{j+\frac{|r-s|}{2}} 
\binom{(\pm1-1) \frac{r-s}{2}-1-s}{j \pm \frac{r-s}{2}}.$$
Here $\binom{a}{b}$ with $a<0$ are defined in accordance with the 
convention that, if $a<0$ and $j \ge 0$, then $\binom{a+j}{j}=(a+j)(a+j-1) \dots (a+1)/j!.$
Thus
$k_0=-r-s$ and
$$a_m^{(j)}=\frac{ (2 \pi)^{r+s+j} 2^j \pi \alpha^+_{-j-\frac{r+s}{2}}}{\Gamma(1+r)\zeta(r+s+1)}  \sigma_{r+s+1}(m)m^{j-1} \quad \text{and}
 \, \, b_m^{(j)}=\frac{ (2 \pi)^{r+s+j} 2^j \pi \alpha^-_{-j-\frac{r+s}{2}}}{\Gamma(1+r)\zeta(r+s+1)}  \sigma_{r+s+1}(m)m^{j-1}$$
which is consistent with Lemma \ref{HM!}.

\subsection{Modular iterated integrals of length one}
In \cite{BrIII}, the space of modular iterated integrals is defined. We will consider only the special case of length one: The space $\MI$ of modular iterated integrals of length one is defined to be the largest subspace of $\mathcal M^!$ which satisfies
\begin{align} \partial \MI & \subset \MI+M^![y^{\pm}] \nonumber \\
\bar \partial \MI & \subset \MI+\overline{ M^!}[y^{\pm}]
\end{align}
A characterisation of this space is provided in \cite{BrIII}:
\begin{prop}\label{char} (Prop. 5.8 of \cite{BrIII}) Any element $F$ of $\MI$ of weights $r, s$ can be written uniquely as
$$F=\sum_{k=0}^{\text{\rm min}(r, s)}F_k$$
for some elements of $\MI$ of weights $r, s$ such that
$\Delta_{r, s}F_k=\lambda_k F_k$ 
where $$\lambda_k=(k-1)(r+s-k).$$
This, in particular, implies that the value of the invariant $k_0$ (see Lemma \ref{HM!}) for $F_k$ is
$$k_0=k-r-s.$$
\end{prop}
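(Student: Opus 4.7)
The strategy is a spectral decomposition: I want to show that $\Delta_{r,s}$, restricted to the weight-$(r,s)$ part of $\MI$, is diagonalisable with spectrum contained in the finite set $\{\lambda_k : 0 \le k \le \min(r,s)\}$, and then extract $F_k$ as the image of $F$ under the corresponding Lagrange-interpolation projection. As preliminaries I would verify two items. First, that $\Delta_{r,s}$ preserves $\MI$: combining $\partial \MI \subset \MI + M^![y^{\pm}]$ with the pointwise identity $\bar\partial_{s-1}(y^j h) = (j+s-1) y^j h$ for holomorphic $h$ (so that $M^![y^{\pm}]$ is $\bar\partial$-stable), the formula $\Delta_{r,s} = -\bar\partial_{s-1}\partial_r + r(s-1)$ gives $\Delta_{r,s}\MI \subset \MI$. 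Second, the equivariances $\Delta_{r+1,s-1}\partial_r = \partial_r\Delta_{r,s}$ and $\Delta_{r-1,s+1}\bar\partial_s = \bar\partial_s\Delta_{r,s}$, each of which follows at once from the two alternative expressions for $\Delta_{r,s}$ recorded in Section~\ref{basics}.

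The heart of the argument is the polynomial identity
\[
\prod_{k=0}^{\min(r,s)}\bigl(\Delta_{r,s} - \lambda_k\bigr) F = 0
\qquad\text{for every } F \in \MI \text{ of weights } (r,s).
\]
Two complementary routes suggest themselves. The conceptual one is to observe that, modulo the ``weakly (anti)holomorphic noise'' $M^![y^{\pm}] + \overline{M^!}[y^{\pm}]$, the operators $\partial, \bar\partial$ equip $\MI$ with the structure of an $\mathfrak{sl}_2$-module; the Laplacian $\Delta$ becomes, up to an explicit additive shift, a polynomial in the Casimir, and its spectrum on the weight-$(r,s)$ component is exactly the list $\{\lambda_k\}$. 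The computational alternative is to exploit the Fourier expansion~\eqref{FE}: group its terms by their $(j,m,n)$-pattern, use Lemma~\ref{HM!} to read off the eigenvalue of each pattern, and show that the $\MI$-condition confines the range of admissible $k_0$ precisely to the set $\{k-r-s : 0 \le k \le \min(r,s)\}$.

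Once the polynomial identity is in hand, the rest is formal. The $\lambda_k$ are pairwise distinct, so Lagrange interpolation produces commuting projections $\pi_k \in \C[\Delta_{r,s}]$ with $\sum_k \pi_k = \mathrm{id}$ and $\Delta_{r,s}\pi_k = \lambda_k\pi_k$; setting $F_k := \pi_k F$ gives both existence and uniqueness of the decomposition, and each $F_k$ lies in $\MI$ because $\pi_k$ is a polynomial in $\Delta_{r,s}$ and $\MI$ is $\Delta$-stable. For the value of $k_0$, the equation $k_0(1-r-s-k_0) = (k-1)(r+s-k)$ has the two integer roots $k_0 \in \{k-r-s,\,1-k\}$, and the inequality $k_0 < (1-r-s)/2$ required by Lemma~\ref{HM!} selects $k_0 = k-r-s$ because $k \le \min(r,s) \le (r+s)/2$.

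The chief obstacle is the polynomial identity itself. The noise subspaces $M^![y^{\pm}]$ and $\overline{M^!}[y^{\pm}]$ are not annihilated by any single factor $\Delta - \lambda_k$, and iterating $\partial, \bar\partial$ makes their contributions proliferate; controlling this propagation — whether via a filtered $\mathfrak{sl}_2$-module structure on $\MI$ or via a direct bookkeeping of Fourier coefficients along the lines of Lemma~\ref{HM!} — is where the real work lies.
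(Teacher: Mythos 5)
First, a point of orientation: the paper does not prove this proposition at all --- it is imported verbatim as Prop.~5.8 of \cite{BrIII} --- so you are being asked to reconstruct a result whose proof lives in Brown's structure theory of $\MI$. Your formal scaffolding is fine as far as it goes: \emph{granted} the identity $\prod_{k=0}^{\min(r,s)}(\Delta_{r,s}-\lambda_k)F=0$ on the weight-$(r,s)$ part of $\MI$, the Lagrange projections $\pi_k\in\C[\Delta_{r,s}]$ do give existence and uniqueness of the decomposition; the $\lambda_k$ are indeed pairwise distinct (the parabola $x\mapsto(x-1)(r+s-x)$ is symmetric about $(r+s+1)/2$, and $k+j\le 2\min(r,s)\le r+s$); and your selection of $k_0=k-r-s$ over the second root $1-k$ via the inequality $k_0<1-r-s-k_0$ of Lemma~\ref{HM!} is correct.

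The gap is that the one step carrying all the mathematical content is not proved, and you say so yourself: the polynomial identity is labelled ``the chief obstacle'' and ``where the real work lies,'' with two routes that merely ``suggest themselves.'' Neither sketch closes. The $\mathfrak{sl}_2$/Casimir route founders exactly on the point you flag: $\MI$ is an $\mathfrak{sl}_2$-module only modulo the noise spaces $M^![y^{\pm}]+\overline{M^!}[y^{\pm}]$, and without further input there is no reason the resulting module decomposes with highest weights confined to $\{0,\dots,\min(r,s)\}$. The Fourier-expansion route presupposes that $F$ already lies in $\HM$, i.e.\ is a finite sum of Laplace eigenfunctions with eigenvalues of the form $k_0(1-r-s-k_0)$ --- which is essentially what is to be proved. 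There is also a smaller unproved step upstream: your argument for $\Delta_{r,s}\MI\subset\MI$ actually yields only $\Delta_{r,s}\MI\subset\MI+M^![y^{\pm}]+\overline{M^!}[y^{\pm}]$, since $\bar\partial_{s-1}$ applied to the $\MI$-part of $\partial_r F$ produces an $\overline{M^!}[y^{\pm}]$ error term; you would need to know additionally that the noise spaces (intersected with $\M^!_{r,s}$) lie inside $\MI$. All of this is resolved in \cite{BrIII} by the structure theory of $\MI$ (the analysis of the images of $\partial$ and $\bar\partial$ and of the admissible constant terms), which is precisely what pins the spectrum down to $\{\lambda_k\}$; a self-contained proof must reproduce that analysis rather than only the diagonalisation formalism built on top of it.
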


We will interpret the functions $F_k$ of the last proposition in the setting of the last section.
\begin{prop} \label{Fk} Let $F$ be an element of $\MI$ with weights $r, s$ and let $F_k$ ($k \in \{0, \dots, \text{\rm min}(r, s)\}$) 
be as in Prop. \ref{char}. Then $y^{\frac{r+s}{2}}F_k(z)$ is $\G$-invariant under the action of $|_{r-s}$ and is an eigenfunction of $\Omega_{r-s}$ with eigenvalue 
$$\frac14-\mu_k^2 \qquad \qquad \text{where $\mu_k=- k+\frac{r+s+1}{2}.$}$$ 
For $g=\tilde F_k, \mathring{F}_k, y^{k-r-s}, y^{1-k}$ (in the notation \eqref{tilde}, \eqref{ring}) we have
\begin{equation}\label{eprop}\Omega_{r-s} \left ( y^{\frac{r+s}{2}} g \right ) =\left(\frac14-\mu_k^2 \right )y^{\frac{r+s}{2}}g 
\end{equation}
\end{prop}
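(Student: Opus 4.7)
The plan is to obtain both conclusions as direct applications of Lemma \ref{trans}, the only real content being to (i) verify the algebraic identity relating the two eigenvalues and (ii) check the eigenfunction hypothesis of the lemma for each of the listed $g$'s.

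For the first assertion, Proposition \ref{char} gives $\Delta_{r,s}F_k=\lambda_k F_k$ with $\lambda_k=(k-1)(r+s-k)$, and $F_k\in\MH^!_{r,s}$ by construction, so Lemma \ref{trans} applies directly to $F_k$. The $|_{r-s}$-invariance of $y^{(r+s)/2}F_k$ is the first half of the conclusion of Lemma \ref{trans}, while the eigenvalue claim reduces to the purely algebraic identity
$$\lambda_k+\frac{r+s}{2}\left(1-\frac{r+s}{2}\right)=\frac14-\mu_k^2,$$
which is an elementary expansion of both sides after substituting $\mu_k=\frac{r+s+1}{2}-k$.

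For the second assertion, note that the $\Omega_{r-s}$ computation in the proof of Lemma \ref{trans} uses only the identity $\Delta_{r,s}=\Omega_{r-s}-(r+s)y\,\partial/\partial y$ together with the eigenfunction relation; the $|_{r-s}$-invariance is irrelevant for the $\Omega$-eigenvalue. Consequently, for any smooth $g$ on $\HH$ that satisfies $\Delta_{r,s}g=\lambda_k g$ one has $\Omega_{r-s}(y^{(r+s)/2}g)=(\tfrac14-\mu_k^2)y^{(r+s)/2}g$. So it suffices to show that each of $\tilde F_k$, $\mathring F_k$, $y^{k-r-s}$, and $y^{1-k}$ satisfies $\Delta_{r,s}g=\lambda_k g$. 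Since $F_k\in\HM(\lambda_k)\cap\M^!_{r,s}$ and the invariant $k_0$ of Lemma \ref{HM!} equals $k-r-s$ (giving $1-r-s-k_0=1-k$), the two monomials $y^{k-r-s}$ and $y^{1-k}$ are eigenfunctions with eigenvalue $\lambda_k$ by the last assertion of Lemma \ref{HM!}.

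For $\tilde F_k$ and $\mathring F_k$, apply Lemma \ref{HM!} to write $F_k=F_k^0+F_k^h+F_k^a$ with $F_k^h$ and $F_k^a$ each $\lambda_k$-eigenfunctions. The key observation is that $\Delta_{r,s}$ preserves $q^m$-Fourier modes: indeed, acting on a series $\sum_j y^j\sum_m a_m^{(j)}q^m$, the operator $\partial_x$ contributes only the scalar $2\pi im$ on each $q^m$ while $\partial_y$ commutes with multiplication by $q^m$, so $\Delta_{r,s}$ acts diagonally in $m$. Hence the equation $\Delta_{r,s}F_k^h=\lambda_k F_k^h$ decouples mode-by-mode, and regrouping the modes with $m\geq 1$ versus $m\leq -1$ shows that the exponentially decaying and exponentially growing parts of $F_k^h$ are each $\lambda_k$-eigenfunctions; the same argument applies to $F_k^a$ (with $\bar q^m$ in place of $q^m$). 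Since by \eqref{tilde} and \eqref{ring} the function $\tilde F_k$ is the sum of the decaying pieces and $\mathring F_k$ the sum of the growing pieces of $F_k^h$ and $F_k^a$, each of $\tilde F_k$ and $\mathring F_k$ is a $\lambda_k$-eigenfunction of $\Delta_{r,s}$, and the claim follows. The main delicate point is the mode-by-mode argument in this last step, since Lemma \ref{HM!} only asserts the eigenfunction property for $F_k^h$ and $F_k^a$ as a whole, not for their decaying/growing sub-expansions; once the diagonal action of $\Delta_{r,s}$ on Fourier modes is isolated, everything else is formal.
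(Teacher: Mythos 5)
Your proposal is correct and follows essentially the same route as the paper: both parts come down to the computational content of Lemma \ref{trans} together with the fact \eqref{Deltaaction} that $\Delta_{r,s}$ preserves each Fourier mode $\mathcal P\cdot q^m\bar q^n$, the only presentational difference being that the paper separates the positive and negative modes of $F_k^h$, $F_k^a$ by a growth-versus-decay contradiction whereas you invoke the diagonal action and uniqueness of Fourier coefficients directly (and then the paper gets $\tilde F_k$ by subtraction rather than as the sum of the decaying modes). One small slip: ``$\partial_y$ commutes with multiplication by $q^m$'' is literally false since $q^m$ depends on $y$; the property you actually need and use is that $\Delta_{r,s}$ maps $\mathcal P\cdot q^m$ into itself, which is exactly \eqref{Deltaaction}.
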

\begin{proof}
By Lemma \ref{trans} and Prop. \ref{char}, $y^{\frac{r+s}{2}}F_k(z)$ is $\G$-invariant under the action of $|_{r-s}$ and eigenfunction of $\Omega_{r-s}$ with eigenvalue
$$\lambda_k+\left ( \frac{r+s}{2} \right ) \left ( 1-\frac{r+s}{2} \right )=
(k-1)(r+s-k)+\left ( \frac{r+s}{2} \right ) \left ( 1-\frac{r+s}{2} \right )=\frac14-\left ( k-\frac{r+s+1}{2}\right )^2.$$


To establish the eigen-properties \eqref{eprop}  we apply the last statement of Lemma \ref{HM!} to $F_k \in \HM(\lambda_k).$ Then since $\Delta_{r, s}F_k^h=\lambda_k F_k^h$ and $\Delta_{r, s}F^0_k=\lambda_kF^0_k$, we have
\begin{equation}\label{e.f.} \left ( \Delta_{r, s}-\lambda_k \right ) \left ( 
\sum_{j=k_0}^{-s} y^j\sum_{-N \le m <0} a_{m}^{(j)} q^m \right )=-
\left ( \Delta_{r, s}-\lambda_k \right ) \left ( 
\sum_{j=k_0}^{-s} y^j\sum_{m >0} a_{m}^{(j)} q^m \right )
\end{equation}
Let $\mathcal P$ be the space of polynomials of $y$ over $\C$. By (2.22) of \cite{BrI} we have
\begin{equation}\label{Deltaaction}
\Delta_{r, s}(\mathcal P \cdot q^m \bar q^{n}) \subset \mathcal P \cdot q^m \bar q^{n}
\end{equation}
for each $m, n \in \ZZ$. Therefore, the LHS of \eqref{e.f.} will be a polynomial in $q^{-1}$ with coefficients in $\mathcal P$ and thus, if not identically $0$, it will have exponential growth as $y \to \infty$. This is impossible because, by \eqref{Deltaaction}, the RHS of \eqref{e.f.} decays exponentially as $y \to \infty$. Therefore the LHS vanishes and 
$$\Delta_{r, s} \left ( \sum_{j=k_0}^{-s} y^j\sum_{-N \le m <0} a_{m}^{(j)} q^m \right )=\lambda_k \left ( \sum_{j=k_0}^{-s} y^j\sum_{-N \le m <0} a_{m}^{(j)} q^m \right ).$$

We similarly see that $\sum_{j=k_0}^{-r} y^j\sum_{-N' \le m <0} b_{m}^{(j)} \bar q^m$ is an eigenfunction of $\Delta_{r, s}$ with eigenvalue $\lambda_k$. Thus $\mathring{F}_k$ is an eigenfunction of $\Delta_{r, s}$ with eigenvalue $\lambda_k.$

Since $y^{k-r-s}, y^{1-k}$ are also eigenfunctions of $\Delta_{r, s}$ with eigenvalue $\lambda_k$, we deduce 
(with Lemma \ref{trans}) the desired eigenproperties of $y^{(s+r)/2}y^{k-r-s}$ and $y^{(s+r)/2}y^{1-k}$. The eigenproperties of  $y^{(s+r)/2}\mathring{F}_k$,
$y^{(s+r)/2}y^{k-r-s}, y^{(s+r)/2}y^{1-k}, $ just proved, together with the eigenproperty of  $y^{(s+r)/2}F_k$ then imply the eigenproperty
of $y^{(s+r)/2}\tilde F_k$.
\end{proof}

\section{L-functions}\label{Lfunctions}
The obstacles to extending the definition of L-functions of standard modular forms to $\HM$ are due to the potentially exponential growth of functions in $\mathcal M^!$ combined with the lack of holomorphicity. To tackle the former we can give a definition that is based on the expression of standard L-functions through Mellin transforms. This will, in fact, allow us to define L-functions on the entire $\mathcal M^!.$ 

\subsection{L-functions in $\mathcal M^!$.}\label{Lfun}
Let $f \in \mathcal M^!_{r, s}$ with an expansion \eqref{FE}. 
We let the implied logarithm take the principal branch of the logarithm and we set, for $w \ne -j, r+s+j$ ($|j| \le M$),
\begin{multline}\label{Lf}L_f^*(w):=\left ( \int_1^{\infty}\tilde f(it) t^{w-1}dt + \int_1^{-\infty}\mathring{f}(it) t^{w-1}dt
-\sum_{|j| \le M} \sum_{\substack{m, n \ge -N \\ m+n=0}}\frac{ a_{m, n}^{(j)}}{w+j} \right )+ \\
i^{r-s} \left ( \int_1^{\infty} \tilde f(it)t^{r+s-w-1}dt +\int_1^{-\infty} \mathring{f}(it) t^{r+s-w-1}dt
-\sum_{|j| \le M} \sum_{\substack{m, n \ge -N \\ m+n=0}}\frac{ a_{m, n}^{(j)}}{r+s-w+j}
\right ) .
\end{multline}
The rigorous meaning of the first integral from $1$ to $-\infty$ is 
\begin{equation}\label{rigo} 
\sum_{|j| \le M} \sum_{\substack{m, n \ge -N \\ m+n<0}} \frac{a_{m, n}^{(j)}}{(2 \pi (m+n))^{j+w}} \Gamma(j+w, 2 \pi (m+n)) 
\end{equation}
where $\Gamma(r, z)$ denotes the incomplete Gamma function
\[
  \Gamma(r,z) := \int_{z}^\infty e^{-t}t^{r}\, \frac{dt}{t}.
\]
For $z \ne 0$, this has an analytic continuation to the entire $r$-plane and therefore, \eqref{rigo}
is well-defined for all values of $w$ by the analytic continuation of incomplete Gamma function. By contrast, the real integral as written in \eqref{Lf} is not convergent at $0$ unless $\Re(w)>1+M.$ 
We interpret likewise the second integral from $1$ to $-\infty$ in \eqref{Lf}. The reason we preferred to write formally those terms as integrals was to stress the symmetry with the other terms and hint at the origin of the definition in a `regularisation' introduced in \cite{Br}.   
   
Since, in addition, $\tilde f$ decays exponentially at infinity, all integrals in \eqref{Lf} are well-defined.  As mentioned above, the above construction was inspired by the `regularisation' introduced in \cite{Br}, Sect. 4. (See \cite{DR}, for another application of this idea.) 
 
The definition immediately implies the following:
\begin{prop} Let $f \in \mathcal M^!_{r, s}$ (with $r \equiv s \mod 2$). The L-function of $f$ is meromorphic with finitely many poles and satisfies
$$L_f^*(w)=i^{r-s}L_f^*(r+s-w)$$
for all $w$ away from the poles. 
\end{prop}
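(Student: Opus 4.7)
The statement has two assertions to verify: meromorphicity with finitely many poles, and the functional equation. My plan is to handle both essentially from the shape of the defining expression.

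For meromorphicity, I would dissect \eqref{Lf} into four types of contributions. The integrals $\int_1^\infty \tilde f(it)t^{w-1}dt$ and $\int_1^\infty \tilde f(it)t^{r+s-w-1}dt$ are absolutely convergent for every $w\in\C$ because $\tilde f$ decays exponentially as $y\to\infty$; standard estimates then show they define entire functions of $w$. The formal integrals from $1$ to $-\infty$ are defined, per \eqref{rigo}, as a finite sum of terms of the shape $a_{m,n}^{(j)}(2\pi(m+n))^{-(j+w)}\Gamma(j+w,\,2\pi(m+n))$ with $m+n<0$; since $2\pi(m+n)\neq 0$, each incomplete Gamma factor admits analytic continuation to an entire function in its first argument, so the whole sum is entire in $w$. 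Finally, the rational corrections $-\sum_{|j|\le M}\sum_{m+n=0}a_{m,n}^{(j)}/(w+j)$ and the analogous terms in the second bracket introduce only the simple poles at $w=-j$ and $w=r+s+j$, with $|j|\le M$, which are finitely many.

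For the functional equation, I would introduce the shorthand
\[
A(w):=\int_1^\infty \tilde f(it)t^{w-1}\,dt+\int_1^{-\infty}\mathring{f}(it)t^{w-1}\,dt-\sum_{|j|\le M}\sum_{\substack{m,n\ge -N\\ m+n=0}}\frac{a_{m,n}^{(j)}}{w+j},
\]
so that the defining formula becomes $L_f^*(w)=A(w)+i^{r-s}A(r+s-w)$. Substituting $w\mapsto r+s-w$ gives $L_f^*(r+s-w)=A(r+s-w)+i^{r-s}A(w)$, and multiplying by $i^{r-s}$ yields
\[
i^{r-s}L_f^*(r+s-w)=i^{r-s}A(r+s-w)+i^{2(r-s)}A(w).
\]
Since $r\equiv s\pmod 2$, the integer $r-s$ is even, so $i^{2(r-s)}=1$, and the right-hand side equals $L_f^*(w)$.

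The only mild obstacle is bookkeeping: I need to justify that the formal integral notation for the ``$1$ to $-\infty$'' pieces really does transform under $w\mapsto r+s-w$ in the symmetric way used above. This reduces to observing that \eqref{rigo} is genuinely a function of $w+j$ (through both the exponent of $2\pi(m+n)$ and the first argument of $\Gamma$), so the substitution commutes with the regularisation. Apart from this, the argument is a symbolic symmetry built into \eqref{Lf}, as the paper's remark on its origin in Brown's regularisation already suggests.
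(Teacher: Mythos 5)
Your proof is correct and matches the paper's approach: the paper gives no written proof, asserting that the proposition follows immediately from the definition, and your argument simply spells out that immediacy (exponential decay gives entire integrals over $[1,\infty)$, the regularised pieces are finite sums of entire incomplete Gamma terms, the rational corrections give the finitely many poles, and the functional equation is the built-in symmetry $L_f^*(w)=A(w)+i^{r-s}A(r+s-w)$ with $i^{2(r-s)}=1$ since $r-s$ is even). Nothing further is needed.
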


 In such generality, the definition is somewhat formal and would be unlikely to lead to arithmetic insight for all $f \in \mathcal M^!_{r, s}$. To obtain more refined information, we restrict to subspaces of $ \mathcal M^!_{r, s}.$ 

We first note that, in the subspace $\mathcal M_{r, s}$ of $f \in \mathcal M^!_{r, s}$ with moderate growth at infinity, our definition coincides with that of Sect. 9.4 of \cite{BrI}. Specifically, in that case, $\mathring f=0$ and the Fourier coefficients of $f$ have polynomial growth. For Re$(w) \gg 0,$ the change of variable $t \to 1/t$ in the third integral of \eqref{Lf} together with the transformation law of $f$ implies that $L_f^*(w)$ coincides with the function $\Lambda(f, w)$ of  Sect. 9.4. of \cite{BrI}. See also, Section 9.4 of \cite{BrII} where this construction is applied to the important subclass of  $\mathcal M_{r, s}$ consisting of modular analogues of the single-valued polylogarithms.

\subsection{L-functions in $\HM(\lambda)$ and in $\MI.$}
Let $f \in \HM(\lambda) \cap \M^!_{r, s}$. Using the Fourier expansion of $f$ provided by Lemma \ref{HM!}, the general definition of $L^*_f(w)$ we gave above leads to an expression as a series.  This is more natural because it is reminiscent of the original definition of $L$-series of standard modular forms and because, in the case of weakly holomorphic modular forms, it coincides with the  L-functions already associated with such forms (\cite{BDE} and references therein).

To ensure that the series we will eventually obtain converges absolutely, we need an analogue of the ``trivial bound" about the Fourier coefficients. 
As in the case of weakly holomorphic forms (\cite{BF}, Lemma 3.2), the growth is, in general, exponential. 
Although the proof parallels that of \cite{BF}, there are some complications because of the presence of two weights and of the powers of $y$, so we present a full proof.
\begin{prop}\label{bound} Let $f \in \HM(\lambda)$. With the notation of Lemma \ref{HM!}, for each $j \in \{k_0, \dots, -s\}$ (resp. $j \in \{k_0, \dots, -r\}$), there is a $C>0$ such that, 
$$a_n^{(j)} \ll e^{C \sqrt{n}} \qquad (\text{resp. $b_n^{(j)} \ll e^{C \sqrt{n}}$}) \qquad \text{as $n \to \infty$}$$
\end{prop}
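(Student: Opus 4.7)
\emph{Proof proposal.} The plan is to adapt the weakly-holomorphic argument of \cite{BF}, Lem.~3.2, handling the two new features of the present setting: the two-weight modular transformation and the polynomial-in-$y$ structure of \eqref{fh}--\eqref{fa}.

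First I would establish the pointwise bound
$$|f(z)| \,\ll\, y^{-|r+s|}\, e^{C/y} \qquad \text{for } 0 < y \le 1, \text{ uniformly in } x.$$
From $f\underset{r,s}{||}\gamma=f$ and $|j(\gamma,z)| = |j(\gamma,\bar z)| = (y/\Im(\gamma z))^{1/2}$ one has $|f(z)| = (\Im(\gamma z)/y)^{(r+s)/2}\, |f(\gamma z)|$ for every $\gamma \in \G$. Choosing $\gamma$ with $\gamma z$ in the standard fundamental domain $\mathcal F$, the inequality $|cz+d|^2 \ge \min(1,(cy)^2) \ge y^2$ (valid for $y\le 1$) forces $\Im(\gamma z) \le 1/y$, and the growth condition on $\M^!_{r,s}$ at $i\infty$ bounds $|f(\gamma z)| \ll e^{C\,\Im(\gamma z)}$. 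Since, by Lemma~\ref{HM!}, $f^0$ is a sum of two monomials in $y$, the same estimate holds for $|f^h + f^a|$ after adjusting $C$.

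For each $n$ larger than the depths $N, N'$ of the two principal parts, the Fourier integral
$$\int_0^1 (f^h+f^a)(x+iy)\, e^{-2\pi i n x}\, dx \,=\, e^{-2\pi n y} \sum_{j=k_0}^{-s} a_n^{(j)}\, y^j$$
isolates the $a_n^{(j)}$'s, since the only $\bar q^{m}$ contribution from $f^a$ would require $m=-n$, which is excluded. Combining with the pointwise bound,
$$\left| \sum_{j=k_0}^{-s} a_n^{(j)}\, y^j \right| \,\ll\, y^{-|r+s|}\, e^{C/y + 2\pi n y},$$
whose right-hand side at the optimum $y_0 = \sqrt{C/(2\pi n)}$ is $\ll n^A\, e^{2\sqrt{2\pi C n}}$ for some $A$.

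Finally, to extract each individual $a_n^{(j)}$ from this polynomial identity in $y$ I would evaluate it at $L := -s-k_0+1$ nearby points $y_\ell = y_0(1+\ell\delta)$ ($\ell=0,\dots,L-1$, small fixed $\delta>0$) and invert the resulting Vandermonde system. Because the $y_\ell$ differ multiplicatively by $1+O(\delta)$, the Vandermonde determinant is bounded below by a fixed power of $y_0 \sim n^{-1/2}$, so inversion costs at most a polynomial-in-$n$ factor, yielding $|a_n^{(j)}| \ll e^{C'\sqrt n}$. The bound on $b_n^{(j)}$ is obtained identically upon replacing $e^{-2\pi i n x}$ by $e^{+2\pi i n x}$ to isolate the $\bar q^{n}$ modes. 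The main new difficulty over \cite{BF} is the $j$-component separation performed in this last step; it is manageable only because $L$ is a fixed finite quantity depending on $(r,s,\lambda)$.
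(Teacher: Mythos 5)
Your proposal is correct, and its skeleton coincides with the paper's: a pointwise bound $|f(z)|\ll y^{-A}e^{C/y}$ as $y\to 0$ obtained from modularity of $y^{(r+s)/2}|f|$ and the exponential growth condition at $i\infty$ (this is exactly Lemma \ref{fbound} in the paper), followed by the Fourier-integral identity $\int_0^1 f(z)e^{-2\pi i n z}\,dx=\sum_{j=k_0}^{-s}a_n^{(j)}y^j$ for $n>\max(N,N')$, and the choice $y\asymp n^{-1/2}$. Where you genuinely diverge is in the final step of separating the individual coefficients $a_n^{(j)}$ from the Laurent polynomial in $y$: the paper differentiates the identity $-s-k_0$ times in $y$ to peel off the top coefficient and then iterates downward, which forces it to prove the auxiliary fact that $\partial^j/\partial y^j$ preserves $\M^!$ (via the Maass-operator identity \eqref{Maass}) so that the $y\to 0$ bound also applies to $\partial^j f/\partial y^j$. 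Your Vandermonde/Lagrange interpolation at the $L=-s-k_0+1$ points $y_\ell=y_0(1+\ell\delta)$ avoids controlling derivatives of $f$ altogether, at the price only of a polynomial-in-$n$ loss from the determinant $\prod_{\ell<\ell'}(y_{\ell'}-y_\ell)\gg y_0^{\binom{L}{2}}$, which is harmless against $e^{C\sqrt n}$. This is a legitimate and arguably cleaner alternative; the only cosmetic slip is the claim $\min(1,(cy)^2)\ge y^2$, which fails for $c=0$, though the intended conclusion $|cz+d|^2\ge y^2$ for $y\le 1$ still holds there since $d=\pm 1$.
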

\begin{proof}  
Set $N_0=$max$(N, N')$ and let $n>N_0.$ Then we have
\begin{multline}\label{a}
\int_0^1f(z)e^{-2 \pi i n z} dx=
\int_0^1f^0(z)e^{-2 \pi i n z} dx+\\
\sum_{j=k_0}^{-s} y^j\sum_{\substack{m \ge -N \\ m \ne 0}}a_{m}^{(j)} \int_0^1e^{2 \pi i(m-n)z}dx +
\sum_{j=k_0}^{-r} y^j\sum_{\substack{m \ge -N' \\ m \ne 0}} a_{m}^{(j)} e^{2 \pi (n-m)y}\int_0^1 e^{-2 \pi i (m+n)x}dx
\\=\sum_{j=k_0}^{-s} y^j a_{n}^{(j)}
\end{multline}
since, for $n>N_0$ and $m \ge -N',$ $m+n>0$. 
Likewise,
\begin{equation}\label{b} \int_0^1f(z)e^{2 \pi i n z} dx=\sum_{j=k_0}^{-r} y^j b_{n}^{(j)}e^{-4 \pi n y}.
\end{equation}
Suppose that $-s>k_0$. Then \eqref{a} implies
$$e^{2 \pi n y} y^{-k_0} \int_0^1f(z)e^{-2 \pi i n x} dx=\sum_{j=0}^{-s-k_0} y^j a_n^{(j+k_0)}$$
and thus
\begin{equation}\label{Fcoeff}
(-s-k_0)! a_n^{(-s)}= \frac{\partial^{-s-k_0}}{\partial y^{-s-k_0}} \left ( e^{2 \pi n y} y^{-k_0} \int_0^1f(z)e^{-2 \pi i n x} dx \right ).
\end{equation}
The RHS will be a sum of products of $e^{2 \pi n y}$, polynomials in $y$ and $n$  and 
\begin{equation}\label{int} \int_0^1\frac{\partial^j f(z)}{\partial y^j} e^{-2 \pi i n x} dx \end{equation}
for $j \in \{0, \dots, -s-k_0\}$. Now, we note that, for all $k, l$, 
\begin{equation} \label{Maass} \frac{\partial}{\partial y}=\frac{1}{2y}\left (\partial_k+\bar \partial_l-k-l \right ) \end{equation}
and, since $y^{-1} \in \M^!_{1, 1}$, we have  $\frac{\partial}{\partial y} \left ( \M^! \right ) 
\subset \M^!$. Thus
\begin{equation}\label{part} 
\frac{\partial^j }{\partial y^j} \left ( \M^! \right ) \subset \M^!.
\end{equation}
We will use this to bound \eqref{int} with the help of this lemma:
\begin{lem}\label{fbound} For each $f \in  \M^!_{r, s}$, there is a $C>0$ such that 
$f(z)=O(e^{C/y}y^{-(r+s)/2})$
as $y \to 0$, uniformly in $x$.
\end{lem}
\begin{proof}
The standard $j$-function $j(z)$ does not vanish in the interior of the standard fundamental domain $\mathcal F$ of $\G$ Therefore, if $c>0$ is such that $f(z)=O(e^{cy})$ as $y \to \infty$ (uniformly in $x$), then $F(z)=|f(z)y^{\frac{r+s}{2}}j(z)^{-c-\epsilon}|$ is bounded in the standard fundamental domain $\mathcal F$ of $\G$.
On the other hand, 
$|f(z)y^{\frac{r+s}{2}}|$ is invariant under any $\g \in \G$ and thus under $S$. This implies that there is a $C>0$, such that, as $y \to 0$, 
$$|f(z)y^{\frac{r+s}{2}}|=|f(-1/z) \text{Im}(-1/z)^{\frac{r+s}{2}}|=O(e^{C \text{Im}(-1/z)})=O(e^{C/y})$$
uniformly in $x$, which gives the result.
\end{proof}
With this lemma and \eqref{part} we see that, for $y \to 0,$
$$\frac{\partial^j f}{\partial y^j} =O(e^{C/y}y^M)$$
for some constant depending on $r, s.$ From \eqref{Fcoeff}, we then conclude
$a_n^{(-s)} \ll e^{2 \pi ny} e^{C/y}y^{M_1}N^{M_2}$, where $C, M_1, M_2$ and the implied constant depend only on $r, s$ and $k_0$. 
For $y=1/\sqrt{n}$ this implies the bound of the proposition in the case $j=-s.$

Next, we differentiate in $y$ both sides of 
$$\sum_{j=0}^{-s-k_0-1} y^j a_n^{(j+k_0)}=-y^{-s-k_0}a_n^{(-s)}+e^{2 \pi n y} y^{-k_0} \int_0^1f(z)e^{-2 \pi i n x} dx$$
to get
$$ (-s-k_0-1)! a_n^{(-s-1)}=-(-s-k_0)! y a_n^{(-s)}+\frac{\partial^{-s-k_0-1}}{\partial y^{-s-k_0-1}} \left ( e^{2 \pi n y} y^{-k_0} \int_0^1f(z)e^{-2 \pi i n x} dx \right ).$$
Arguing as above for the second term, and using the bound for $ a_n^{(-s)}$ we proved above, we deduce the bound for $j=-s-1$. Continuing in this way, we deduce the result for all $j.$

It is clear from the argument (essentially by interchanging the roles of $s$ and $k_0$), that it remains valid when $-s \le k_0.$

To prove the bound for $b_n^{(j)}$, we work in the same way but based on \eqref{b}, instead of \eqref{a}.

\end{proof}
We are now ready to use the Fourier expansion given in Lemma \ref{HM!} to express the L-function of an $f \in \HM(\lambda)$ as a series.
For compactness of notation, we set, for each $j \in \mathbb Z,$ $c_m^{(j)}=a_m^{(j)}+b_m^{(j)}$, where $a_m^{(j)}$ (resp. $b_m^{(j)}$) are taken to be $0$ if $j$ or $m$ is outside the range of $j$- or $m$-summation in \eqref{fh} (resp. \eqref{fa}). Then, by substituting the Fourier expansion of $f$ into \eqref{Lf}, we deduce, for $w \ne -k_0, -k_0+1, k_0+r+s-1, k_0+r+s$,
\begin{multline}\label{Ldef}
L_f^*(w)=\sum_{j \in \mathbb Z} \sum_{m \ne 0} \frac{c^{(j)}_f(m)\Gamma(w+j, 2 \pi m )}{(2 \pi m)^{w+j}}+i^{r-s}\sum_{j \in \mathbb Z} \sum_{m \ne 0} \frac{c^{(j)}_f(m)\Gamma(s+r+j-w, 2\pi m )}{(2 \pi m)^{s+r+j-w}}+P(w)
\end{multline}
 where $P(w)$ denotes
$$\int_0^1(i^{r-s}f^{0}(\frac{i}{t})t^{-r-s}-f^0(it))t^{w}\frac{dt}{t}=\frac{i^{r-s}a}{w-k_0-r-s}+\frac{i^{r-s}b}{w+k_0-1}-\frac{a}{w+k_0}-\frac{b}{w-k_0-r-s+1} .$$
Because of Prop. \ref{bound} and the asymptotics $\Gamma(r, x) \sim e^{-x}x^{r-1}$ as $x \to \infty$ we see that this series  converges absolutely for all $w \in \mathbb C.$ 

\subsubsection{Example: L-function of a weakly holomorphic modular form} In the special case of a weakly holomorphic form, this formula coincides with the earlier definition of an L-function for such forms. Indeed, an $f \in M^!_k$ can be considered as an element of $\HM(0) \cap \M^!_{k, 0}$ with $k_0=1-k, a=b_m^{(j)}=0$ for all $j, m$ and $a_m^{(j)}=0$ for $j \ne 0$ or $m<m_0$ for some $m_0 \in \mathbb Z.$
Then \eqref{Ldef} becomes
$$L_f^*(w)=\sum_{m \ge m_0} \frac{a^{(0)}_f(m)\Gamma(w, 2 \pi m )}{(2 \pi m)^{w}}+i^{k} \sum_{m \ge m_0} \frac{a^{(0)}_f(m)\Gamma(k-w, 2 \pi m)}{(2 \pi m)^{k-w}}-b \left ( \frac{1}{w}+\frac{i^k}{k-w}\right )$$
which coincides with, say, (6.1) of \cite{BDE}.

\subsubsection{L-functions of modular iterated integrals of length $1$.}
Using Prop. \ref{char}, we can now express the L-function of a function $F$ in the broader class $\MI$ of modular iterated integrals of length $1$, in terms of the L-function in $\HM(\lambda)$ for varying $\lambda$. Indeed, let $F$ be an element of $\MI$ of weights $r, s$. Then, for each $s \in \mathbb C$, we have
$$L^*_F(s)=\sum_{k=0}^{\text{\rm min}(r, s)} L^*_{F_k}(s)$$
where $F_k$ are elements of $\HM$ of weight $r, s$ such that $F=F_0+\dots+F_{\text{\rm min}(r, s)}$ as in Prop. \ref{char}.

\subsection{L-functions in $\M_{r, s}$.} 
We now consider the case that $f$ is of polynomial growth at the cusps, i.e. $f \in \M_{r, s}$. Then, $\mathring{f}=0$ and $\tilde f=f-f^0.$ Further, for $\Re(w) \gg 0$, the integral 
$\int_0^{\infty}\tilde f(it)t^{w-1}dt$ converges and therefore we can make the change of variables $t \to 1/t$ in the third integral of \eqref{Lf} to derive
\begin{align*}L_f^*(w) &=
\int_1^{\infty} (f(it)-f^0(it)) t^w \frac{dt}{t}+i^{r-s}\int_0^{1} (f(i/t)-f^0(i/t)) t^{-r-s+w}\frac{dt}{t} \\
&=\int_0^{\infty} (f(it)-f^0(it)) t^{w}\frac{dt}{t} 
\end{align*}
Here we used the transformation law for $f$ and the formula for the antiderivative  of $f^0.$

This coincides with Brown's definition of L-functions of $f  \in \M_{r, s}$ given in \cite{BrI} (Sect. 9.4). There, up to a different normalisation, the L-function is actually defined, for Re$(w) \gg 0$,  by 
\begin{equation}\label{alterndef}
L_f^*(w) =\sum_{|j| \le M}
 (2 \pi)^{-j-w} \Gamma(w + j)L_f^{(j)}(w + j) 
\end{equation}
where, with the notation of \eqref{FE},
$$L_f^{(j)}(w):=\sum_{N \ge 1} \frac{1}{N^w}\left ( \sum_{m+n=N} a^{(j)}_{m, n} \right ).$$
The equivalence of this with our definition 
is established in the proof of Th. 9.7 of \cite{BrI}.

\subsubsection{L-functions in $\HM(\lambda) \cap \M_{r, s}$}
When, in addition, $f \in \M_{r, s}$ is an eigen-function of the Laplacian, then 
by computing Mellin transforms as usual, $L_f^*(w)$ obtains a more familiar form, which however, is valid for $\Re(w) \gg 0$. Specifically, let $f \in \HM(\lambda) \cap \M_{r, s}$. By Lemma \ref{HM!}, there is a $k_0 \in \ZZ$ such that $f=f^0+f^h+f^a$ with
$$ f^0(z)= ay^{k_0}+by^{1-r-s-k_0}, \quad f^h(z)=\sum_{j=k_0}^{-s} y^j\sum_{\substack{m >0}}a_{m}^{(j)} q^m \quad \text{and} \, \,
f^a(z)=\sum_{j=k_0}^{-r} y^j\sum_{\substack{m > 0}} b_{m}^{(j)} \bar q^m $$ 
($a, b, a_{m}^{(j)},  b_{m}^{(j)} \in \C $).
Then for $\Re(w) \gg 0$, \eqref{alterndef} (or, directly, \eqref{Lf}) becomes
\[ L_f^*(w) =
\sum_{j=-r-s}^{-s} \frac{\Gamma(j+w)}{(2 \pi )^{j+w}}\sum_{m>0}\frac{a_m^{(j)}}{m^{j+w}}+\sum_{j=-r-s}^{-r} \frac{\Gamma(j+w)}{(2 \pi )^{j+w}}\sum_{m>0}\frac{b_m^{(j)}}{m^{j+w}}
\]

\subsubsection{Example: L-function of the double Eisenstein series} We can use the above representation of $L^*_f(s)$ and \eqref{FEis} to compute explicitly the L-function of the double Eisenstein series. For $\Re(w) \gg 0,$ we have
\begin{multline*} L_{\mathcal E_{r, s}}^*(w)= \frac{(2 \pi)^{r+s-w} \pi}{\Gamma(r+1)\zeta(r+s+1)}\left ( \sum_{j=-r-s}^{-s}2^j \Gamma(j+w)  \alpha_{-j-\frac{r+s}{2}}^+\sum_{m>0}\frac{\sigma_{r+s+1}(m)}{m^{w+1}}+ \right. \\
\left. \sum_{j=-r-s}^{-r} 2^j 
\Gamma(j+w)  \alpha_{-j-\frac{r+s}{2}}^-\sum_{m>0}\frac{\sigma_{r+s+1}(m)}{m^{w+1}} \right )
= \\
\frac{\zeta(w+1)\zeta(w-r-s) (2 \pi)^{r+s-w} \pi}{\Gamma(r+1)\zeta(r+s+1)}\left ( \sum_{j=-r-s}^{-s} 2^j \Gamma(j+w) \alpha_{-j-\frac{r+s}{2}}^+ +\sum_{j=-r-s}^{-r} 2^j
\Gamma(j+w) \alpha_{-j-\frac{r+s}{2}}^- \right ) 
\end{multline*}
(The sum $\sum_{m>0}\frac{\sigma_{r+s+1}(m)}{m^{w+1}}$ has been computed as in the case of L-functions of the usual Eisenstein series.) The last expression also gives the meromorphic continuation to the entire $w$-plane.

\section{Maass-Selberg forms}\label{PerMaass}
In \cite{LZ}, the authors extend the classical theory of period polynomials of (holomorphic) cusp froms by assigning a period function to Maass cusp forms of weight $0$. M\"uhlenbruch \cite{MTh} later generalised that to Maass cusp forms of real weight. One of the ways to define the period function, in both \cite{LZ} and \cite{MTh}, is based on a differential form called Maass-Selberg form. We recall its definition and some of its properties.

Let $f, g$ be smooth functions defined in an open subset $U$ of $\HH \cup \bar \HH$. For $z=x+iy$, set
$$\{f, g\}^+:=f(z)g(z)\frac{dz}{y} \qquad \text{and} \, \, \,  \{f, g\}^-:=f(z)g(z)\frac{d\bar z}{y}.$$
Let $k \in 2\ZZ$. The {\it Maass-Selberg form} is then defined by 
\begin{equation}\label{MSel} \eta_k(f, g):=\{\partial_{k/2} f, g\}^+ -\{f, \bar \partial_{k/2}g \}^-
\end{equation}
(We normalise slightly differently from \cite{MTh} because we use Brown's version of the Maass operators instead of the
operators $E^+_{2k}=2\partial_k$ and $E^-_{2k}=2 \bar \partial_{-k}$ used in \cite{MTh}.)

The next lemma summarises the properties of Maass-Selberg form we will be needing. 
\begin{lem}\label{MSprop} (Lemma 39 of \cite{MTh}) For each $\g \in \G$, we have
 \newline
1. If $\eta_k(f, g) \circ \g$ denotes the pull-back of the differential form $\eta_k(f, g)$ by the map $z \to \g z$ ($z \in U$), then we have
$$\eta_k(f, g) \circ \gamma=\eta_k(f|_k \g, g|_{-k} \g).$$
\noindent
2. Suppose that, for some $\lambda \in \R$, we have $\Omega_k f=\lambda f$ and $\Omega_{-k} g=\lambda g$.
Then $\eta_k(f, g)$ is closed.
\end{lem}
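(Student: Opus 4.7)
For part 1, I would begin by rewriting everything in the bi-weight language of Lemma \ref{trans}, which tells us $f|_k\g = f\underset{k/2,-k/2}{||}\g$. The Maass operators $\partial_r$ and $\bar\partial_s$ are well-known to be equivariant for $||$: if $h$ has bi-weight $(r,s)$, then
$\partial_r(h\underset{r,s}{||}\g) = (\partial_r h)\underset{r+1,s-1}{||}\g$,
and similarly for $\bar\partial_s$. Translating back, $\partial_{k/2}(f|_k\g) = (\partial_{k/2}f)|_{k+2}\g$ and $\bar\partial_{k/2}(g|_{-k}\g) = (\bar\partial_{k/2}g)|_{-k-2}\g$. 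The only other ingredient is the pullback of the measure: a direct computation using $\g z = (az+b)/(cz+d)$ gives
\[
\frac{d(\g z)}{\mathrm{Im}(\g z)} = \frac{c\bar z+d}{cz+d}\cdot\frac{dz}{y},\qquad \frac{d(\overline{\g z})}{\mathrm{Im}(\g z)} = \frac{cz+d}{c\bar z+d}\cdot\frac{d\bar z}{y}.
\]
Substituting the automorphy factors $(cz+d)^{\pm(k/2+1)}(c\bar z+d)^{\mp(k/2+1)}$ coming from $\partial_{k/2}f$ and $g$ (and their analogues for the second summand), one sees that all $(cz+d)$ and $(c\bar z+d)$ factors cancel, leaving exactly $\{\partial_{k/2}(f|_k\g),g|_{-k}\g\}^+ - \{f|_k\g,\bar\partial_{k/2}(g|_{-k}\g)\}^-$.

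For part 2, I would compute $d\eta_k(f,g)$ directly. Setting $\tilde f := \partial_{k/2}f$ and $\tilde g := \bar\partial_{k/2}g$, the exterior derivative is
\[
d\eta_k(f,g) = \Bigl[-\tfrac{\partial}{\partial z}\bigl(\tfrac{f\tilde g}{y}\bigr) - \tfrac{\partial}{\partial\bar z}\bigl(\tfrac{\tilde f g}{y}\bigr)\Bigr]\,dz\wedge d\bar z.
\]
Expanding the derivatives produces two types of terms: those involving $\partial/\partial z(1/y)=i/(2y^2)$ and $\partial/\partial\bar z(1/y)=-i/(2y^2)$, which contribute $i(\tilde f g - f\tilde g)/(2y^2)$, and a bracket of four first-order derivatives of $f,g,\tilde f,\tilde g$, each divided by $y$. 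The key step is to rewrite these four derivatives using the definitions of the Maass operators and the eigenvalue equations. Specifically, since $r+s=0$ for the bi-weights of $f$ and of $g$, Lemma \ref{trans} identifies $\Omega_k=\Delta_{k/2,-k/2}$ on such functions, so
\[
\bar\partial_{-k/2-1}\tilde f = c\,f,\qquad \partial_{-k/2-1}\tilde g = c\,g,\qquad c:=\tfrac{k}{2}\bigl(-\tfrac{k}{2}-1\bigr)-\lambda.
\]
Solving $\tilde f = 2iy\,\partial f/\partial z + (k/2)f$ (and its three analogues) for the four partial derivatives and substituting, one finds after a short algebraic cancellation that the bracket collapses to $(f\tilde g - \tilde f g)/(2iy)$. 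Dividing by $y$ and combining with the $1/y^2$-terms above, the two contributions exactly cancel, so $d\eta_k(f,g)=0$.

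The only mildly delicate point is the algebraic cancellation in part 2: one must keep track of the $(k/2)$, $(k/2+1)$ and $c$ coefficients so that the $\tilde f\tilde g$, $fg$, $f\tilde g$ and $\tilde f g$ contributions all pair off correctly. This is the main ``obstacle,'' though it is purely a bookkeeping exercise once the eigenvalue equations are used to eliminate second derivatives. Part 1, by contrast, reduces to the standard equivariance of $\partial_r,\bar\partial_s$ plus an automorphy-factor bookkeeping for $d\g z$ and $d\overline{\g z}$.
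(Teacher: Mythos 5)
Your proof is correct, but note that the paper does not actually prove this lemma: it is quoted verbatim as Lemma 39 of \cite{MTh}, with only a remark that the normalisation of $\eta_k$ differs because Brown's Maass operators are used in place of M\"uhlenbruch's $E^{\pm}_{2k}$. Your argument is therefore a genuinely self-contained alternative, and it has the advantage of being phrased directly in the paper's own normalisation (and without the restrictions to real $\zeta$ and general real weight under which \cite{MTh} works). I checked the two computations that carry the weight. For part 1, the pullback identities $d(\g z)/\Im(\g z)=\frac{c\bar z+d}{cz+d}\,\frac{dz}{y}$ and its conjugate are right, and the equivariances $\partial_{k/2}(f|_k\g)=(\partial_{k/2}f)|_{k+2}\g$ and $\bar\partial_{k/2}(g|_{-k}\g)=(\bar\partial_{k/2}g)|_{-k-2}\g$ do follow from $f|_k\g=f\underset{k/2,-k/2}{||}\g$ together with the $||$-equivariance of $\partial_r,\bar\partial_s$ (a one-line check, consistent with the paper's assertion that $\partial_r$ maps $\M^!_{r,s}$ to $\M^!_{r+1,s-1}$); the automorphy factors then cancel exactly as you say. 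For part 2, since $r+s=0$ one has $\Omega_k=\Delta_{k/2,-k/2}=-\bar\partial_{-k/2-1}\partial_{k/2}+\tfrac k2(-\tfrac k2-1)$, giving $\bar\partial_{-k/2-1}\tilde f=cf$ and $\partial_{-k/2-1}\tilde g=cg$ with your constant $c$; substituting the four first derivatives, the $fg$-terms cancel precisely because the \emph{same} $c$ occurs for $f$ and $g$ (this is where the hypothesis of equal eigenvalues enters), the $\tilde f\tilde g$-terms cancel, and the surviving $f\tilde g-\tilde f g$ contributes $-i(\tilde fg-f\tilde g)/(2y^2)$ after dividing by $y$, which kills the $+i(\tilde fg-f\tilde g)/(2y^2)$ coming from differentiating $1/y$. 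So $d\eta_k(f,g)=0$, and the proposal is a complete proof of a statement the paper leaves to an external reference.
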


\subsection{A Maass-Selberg form associated to modular iterated integrals of length one}
To define the Maass-Selberg form that we will associate to modular iterated integrals of length one we need
a function $R_{n, \nu}$ ($n \in 2\ZZ, \nu \in \C$)  we now define. For $z \in \HH \cup \bar \HH$ and $\zeta \ne z, \bar z$, it is given by 
$$R_{n, \nu}(z, \zeta)=\left ( \frac{\zeta-\bar z}{\zeta-z}\right )^{\frac{n}{2}} \left (\frac{ \text{Im} z}{(\zeta-z)(\zeta-\bar z)}\right )^{\frac12-\nu}.$$
For each $\zeta \in \C$, this gives a well-defined real-analytic function of $z$ if we restrict $z$ to the complement in $\HH$ of some path joining $\zeta$ and $\bar \zeta$ and then choose an appropriate branch for the implied logarithm. Likewise, for a suitable subset of $\bar \HH.$

For the specific values of $n, \nu$ we will use the function  $R_{n, \nu}$, it can be defined for all $\zeta \in \C$ and $z \in \HH$. Specifically, for $n=s-r$ and $\mu_k=-k+(r+s+1)/2$ with $k$ as in Prop \ref{char} we have, 
$$R_{n, \mu_k}(z, \zeta)=\left ( \frac{\zeta-\bar z}{\zeta-z}\right )^{\frac{s-r}{2}} \left (\frac{ \text{Im} z}{(\zeta-z)(\zeta-\bar z)}\right )^{k-\frac{r+s}{2}}=(\text{Im} z)^{k-\frac{r+s}{2}} (\zeta-z)^{r-k}(\zeta-\bar z)^{s-k}.$$
Since $k \le r, s$, this can be defined for all $z \in \HH \cup \bar \HH.$

The function $R_{n, \mu_k}$ satisfies
\begin{lem}\label{Rprop} Set $n=s-r$ and $\mu_k=-k+(r+s+1)/2$ with $k \le r, s$.
 \newline
1.  For each $\zeta \in \C$ we have
\begin{align*} \bar \partial_{-\frac{n}{2}}R_{n, \mu_k}(\cdot, \zeta)&=\frac12 (1-2\mu_k-n)R_{n-2, \mu_k}(\cdot, \zeta)
 \\
\Omega_n R_{n, \mu_k}(\cdot, \zeta)&=\left ( \frac14-\mu_k^2 \right )R_{n, \mu_k}(\cdot, \zeta).
\end{align*}
2. For each $\g \in \G$ and $\zeta \in \HH$ we have 
$$R_{n, \mu_k}(\g z, \g \zeta)=j(\g, \zeta)^{1-2\mu_k} \left( \frac{j(\g, z)}{|j(\g, z|} \right )^n R_{n, \mu_k}(z, \zeta).$$
\end{lem}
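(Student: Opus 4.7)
\textbf{Proof proposal for Lemma \ref{Rprop}.} The proof is by direct computation, carried out after first rewriting the function in factored form. Setting $\alpha=n/2+\mu_k-\tfrac12$, $\beta=-n/2+\mu_k-\tfrac12$, $\gamma=\tfrac12-\mu_k$, we have
$$R_{n,\mu_k}(z,\zeta)=(\zeta-\bar z)^{\alpha}(\zeta-z)^{\beta}y^{\gamma}.$$
For the values $n=s-r$, $\mu_k=-k+(r+s+1)/2$ with $k\le r,s$, these exponents become integers $\alpha=s-k$, $\beta=r-k$, $\gamma=k-(r+s)/2$, so no branch choices intervene.

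For part (1), first identity, I would apply $\bar\partial_{-n/2}=-2iy\,\partial/\partial\bar z-n/2$ using $\partial y/\partial\bar z=-1/(2i)$. Since only the $(\zeta-\bar z)^{\alpha}$ and $y^{\gamma}$ factors contribute to the $\bar z$-derivative, we get
$$-2iy\,\partial R/\partial\bar z=\bigl(\tfrac12-\mu_k\bigr)R+\bigl(n/2+\mu_k-\tfrac12\bigr)\frac{2iy}{\zeta-\bar z}R,$$
and adding $-n/2\cdot R$ yields $\tfrac12(1-2\mu_k-n)R\bigl(1-\tfrac{2iy}{\zeta-\bar z}\bigr)$. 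The identity $2iy=z-\bar z$ gives $1-\tfrac{2iy}{\zeta-\bar z}=\tfrac{\zeta-z}{\zeta-\bar z}$, and multiplying $R_{n,\mu_k}$ by this factor shifts $n\mapsto n-2$, producing the claimed formula. For the second identity, I would either repeat the analogous computation of $\partial^2/\partial x^2+\partial^2/\partial y^2$ and $\partial/\partial x$ on $R$ directly, or, more efficiently, combine the first identity with an analogous relation $\partial_{n/2-1}R_{n-2,\mu_k}=\tfrac12(2\mu_k+n-1)R_{n,\mu_k}$ (proved the same way), using the operator identity
$$\Omega_n=-\partial_{n/2-1}\bar\partial_{-n/2}+\tfrac{n}{2}\bigl(\tfrac{n}{2}-1\bigr),$$
which is the $\Omega$-analogue of the factorisation of $\Delta_{r,s}$ established at the start of Section \ref{basics}. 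Composition then gives $\Omega_n R_{n,\mu_k}=\bigl(-\tfrac14(1-2\mu_k-n)(2\mu_k+n-1)+\tfrac{n}{2}(\tfrac{n}{2}-1)\bigr)R_{n,\mu_k}$, and the coefficient simplifies to $\tfrac14-\mu_k^2$.

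For part (2), I would substitute $z\mapsto\gamma z$, $\zeta\mapsto\gamma\zeta$ into the factored form and simplify using the classical identities
$$\gamma w_1-\gamma w_2=\frac{w_1-w_2}{j(\gamma,w_1)j(\gamma,w_2)},\qquad \overline{\gamma z}=\gamma\bar z,\qquad \overline{j(\gamma,z)}=j(\gamma,\bar z),\qquad \mathrm{Im}(\gamma z)=\frac{y}{|j(\gamma,z)|^2}.$$
A short calculation gives
$$\frac{\gamma\zeta-\overline{\gamma z}}{\gamma\zeta-\gamma z}=\frac{\zeta-\bar z}{\zeta-z}\cdot\frac{j(\gamma,z)}{\overline{j(\gamma,z)}},\qquad \frac{\mathrm{Im}(\gamma z)}{(\gamma\zeta-\gamma z)(\gamma\zeta-\overline{\gamma z})}=\frac{j(\gamma,\zeta)^2\,y}{(\zeta-z)(\zeta-\bar z)}.$$
Raising to the powers $n/2$ and $1/2-\mu_k$ respectively and collecting, the $j(\gamma,\zeta)$ factors combine into $j(\gamma,\zeta)^{1-2\mu_k}$, and $(j(\gamma,z)/\overline{j(\gamma,z)})^{n/2}=(j(\gamma,z)/|j(\gamma,z)|)^{n}$, giving the asserted transformation law.

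The main obstacle is in fact minor: it is the operator identity relating $\Omega_n$ to $\partial$ and $\bar\partial$ needed for (1b). This is a purely formal computation that should be done once and recorded cleanly; the only subtlety is making sure the constant term matches, which is fixed by evaluating on a simple test function such as $y^a$. All other steps are bookkeeping with the chain rule and the standard automorphy identities.
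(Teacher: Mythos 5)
Your strategy is sound and is genuinely different from the paper's: the paper does not compute anything here, but simply cites Proposition 36 of M\"uhlenbruch's thesis and remarks that the restriction there to $\zeta\in\R$, $j(\g,\zeta)>0$ can be dropped because for $n=s-r$, $\mu_k=-k+(r+s+1)/2$ the exponents $\alpha=s-k$, $\beta=r-k$, $\gamma=k-(r+s)/2$ are integers. Your self-contained computation is arguably preferable, since it makes the normalisation issues (the paper uses $\partial_r,\bar\partial_s$ rather than M\"uhlenbruch's $E^{\pm}$) explicit. Your proofs of part (1), first identity, and of part (2) are correct as written.

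However, in part (1b) both constants you wrote down are wrong, and the two errors do not cancel, so the final coefficient does not in fact ``simplify to $\tfrac14-\mu_k^2$'' from your formulas. First, the raising relation should read
$$\partial_{n/2-1}R_{n-2,\mu_k}=\tfrac12\,(n-2\mu_k-1)\,R_{n,\mu_k},$$
not $\tfrac12(n+2\mu_k-1)R_{n,\mu_k}$: the relevant coefficient is $-\beta'+\gamma+(\tfrac n2-1)$ with $\beta'=-\tfrac n2+\mu_k+\tfrac12$ and $\gamma=\tfrac12-\mu_k$, which gives $\tfrac n2-\mu_k-\tfrac12$. Second, the operator identity has the opposite sign on the constant: taking $(r,s)=(\tfrac n2,-\tfrac n2)$ in $\Delta_{r,s}=-\partial_{r-1}\bar\partial_{s}+s(r-1)$ and using $\Delta_{r,s}=\Omega_{r-s}-(r+s)y\,\partial/\partial y$ (so that $\Delta_{n/2,-n/2}=\Omega_n$) yields
$$\Omega_n=-\partial_{n/2-1}\bar\partial_{-n/2}-\tfrac n2\bigl(\tfrac n2-1\bigr).$$
With the corrected versions one gets
$$\Omega_nR_{n,\mu_k}=\Bigl(\tfrac14(n+2\mu_k-1)(n-2\mu_k-1)-\tfrac{n(n-2)}{4}\Bigr)R_{n,\mu_k}=\Bigl(\tfrac{(n-1)^2-4\mu_k^2}{4}-\tfrac{n^2-2n}{4}\Bigr)R_{n,\mu_k}=\bigl(\tfrac14-\mu_k^2\bigr)R_{n,\mu_k},$$
as claimed. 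So the gap is repairable by exactly the check you yourself flagged (pinning the constants down, e.g.\ on $y^a$), but as submitted the arithmetic in (1b) does not close.
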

\begin{proof}
This is essentially Prop. 36 of \cite{MTh} but there it is proved with the restriction that $\zeta \in \R$ and $j(\g, \zeta)>0$
due to the more general $\mu$ and $n$ to which the proposition applies.
\end{proof}

We now associate to $F_k$ Maass -Selberg forms which will be the basis for our construction of the period function for all functions in $\MI$.
\begin{prop}\label{MSmodint} Let $k \in \{0, \dots, \text{\rm min}(r, s) \}$ and $\mu_k=-k+(r+s+1)/2$. For each $\zeta \in \C$, the forms 
$\eta_{r-s}(y^{\frac{r+s}{2}} \left ( \tilde F_k+ay^{k-r-s} \right ), R_{s-r, \mu_k}(\cdot, \zeta) ), \quad 
\eta_{r-s}(y^{\frac{r+s}{2}} \mathring{F}_k, R_{s-r, \mu_k}(\cdot, \zeta) )$
and $\eta_{r-s}(y^{\frac{r+s}{2}} \left ( by^{1-k} \right ), R_{s-r, \mu_k}(\cdot, \zeta) )$
are closed. 
\end{prop}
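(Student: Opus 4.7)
The plan is to reduce the claim to a direct application of Lemma \ref{MSprop}(2), which asserts that $\eta_k(f,g)$ is closed whenever $\Omega_k f = \lambda f$ and $\Omega_{-k} g = \lambda g$ for one common $\lambda$. In our setting the subscript on $\eta$ is $r-s$, so Lemma \ref{MSprop}(2) requires an eigenfunction of $\Omega_{r-s}$ in the first slot and an eigenfunction of $\Omega_{s-r}$ in the second slot, with a shared eigenvalue. The natural candidate for that eigenvalue is $\tfrac14 - \mu_k^2$.

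For the second slot this is immediate from Lemma \ref{Rprop}(1): taking $n=s-r$ one has
\[
\Omega_{s-r}\, R_{s-r,\mu_k}(\cdot,\zeta)=\left(\tfrac14-\mu_k^2\right) R_{s-r,\mu_k}(\cdot,\zeta),
\]
for every $\zeta\in\C$, and nothing more is needed since $k\le r,s$ guarantees that $R_{s-r,\mu_k}(\cdot,\zeta)$ is real-analytic on all of $\HH\cup\bar\HH$.

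For the first slot I would invoke the eigen-property \eqref{eprop} established in Proposition \ref{Fk}: for each of the four functions $g=\tilde F_k,\,\mathring F_k,\,y^{k-r-s},\,y^{1-k}$, the rescaled function $y^{(r+s)/2}g$ is an eigenfunction of $\Omega_{r-s}$ with eigenvalue $\tfrac14-\mu_k^2$. Since $\Omega_{r-s}$ acts linearly, the three specific combinations
\[
y^{\frac{r+s}{2}}\bigl(\tilde F_k+ay^{k-r-s}\bigr),\qquad y^{\frac{r+s}{2}}\mathring F_k,\qquad y^{\frac{r+s}{2}}\bigl(by^{1-k}\bigr)
\]
are all eigenfunctions of $\Omega_{r-s}$ with the same eigenvalue $\tfrac14-\mu_k^2$. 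Matching this with the eigenvalue from Lemma \ref{Rprop}(1) and applying Lemma \ref{MSprop}(2) with $k$ replaced by $r-s$ then yields closedness of each of the three Maass-Selberg forms.

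There is essentially no substantive obstacle; the only thing worth watching carefully is the bookkeeping of indices (the sign flip $r-s\leftrightarrow s-r$ between the two slots, and the shift from $\lambda_k$ in the $\Delta_{r,s}$ formalism to $\tfrac14-\mu_k^2$ in the $\Omega_{r-s}$ formalism that was already handled by Lemma \ref{trans} in the proof of Proposition \ref{Fk}). The proof is therefore a one-line invocation of Lemma \ref{MSprop}(2), once these two eigenvalue computations are combined.
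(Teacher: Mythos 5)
Your proposal is correct and is essentially identical to the paper's own proof: both reduce the claim to Lemma \ref{MSprop}(2) by citing the eigen-property \eqref{eprop} from Proposition \ref{Fk} for the first slot and Lemma \ref{Rprop}(1) for the second slot, with the common eigenvalue $\tfrac14-\mu_k^2$. Nothing is missing.
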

\begin{proof}
By Prop. \ref{Fk}, $y^{\frac{r+s}{2}} \left ( \tilde F_k+ay^{k-r-s} \right )$, $y^{\frac{r+s}{2}} \mathring{F}_k, y^{\frac{r+s}{2}}\left (  by^{1-k} \right ), $ are eigenfunctions of $\Omega_{r-s}$ and, 
by Lemma \ref{Rprop}, $R_{s-r, \mu_k}(\cdot, \zeta)$ is an eigenfunction of $\Omega_{s-r}$. They all have eigenvalue $\frac14-\mu_k^2$.
Therefore, with Lemma \ref{MSprop} we deduce the assertion.
\end{proof}

\section{Cocycles associated to modular iterated integrals of length one.}
We briefly recall the basic cohomological formalism we will need. Let $M$ be a right $\Gamma$-module. If, for a non-negative integer $i$, $C^i(\Gamma, M)=\{s: \G^i \to M\}$ 
denotes the space of $i$-cochains for $\Gamma$  with coefficients in $M$, we define the differential $d^i\colon C^i(\Gamma, M) \to
C^{i+1}(\Gamma, M)$ by
\begin{align}\label{differential}
\begin{split}
&(d^i \sigma)(g_1, \dots, g_{i+1}):=\\
&
\sigma(g_2,\dots,g_{i+1}).g_1
+\sum_{j=1}^i (-1)^j \sigma(g_1,\dots, g_{j+1}g_j, \dots, g_{i+1})
+(-1)^{i+1}\sigma(g_1,\dots,g_{i}).
\end{split}
\end{align}
Then, we define $Z^i(\G, M):=\text{Ker}(d^i)$ (group of $i$-cocyles), $B^i(\G, M):=\text{Im}(d^{i-1})$ (group of $i$-coboundaries)
and $H^i(\G, M):=Z^i(\G, M)/B^i(\G, M)$ ($i$-th cohomology group of $M$).

In {\it Eichler cohomology}, the module $M$ is the space $P_m(K)$ of polynomial functions of degree $\le m$ and coefficients in a field $K$, acted upon by $\underset{-m, 0}{||}$.
An important theorem is the Eichler Shimura isomorphism
\begin{equation}\label{ESmap}
\phi: \overline {S_k} \oplus M_k \xrightarrow{\sim} H^1(\Gamma, P_{k-2}(\C))
\end{equation}
where $M_k$ (resp. $S_k$) is the space of classical holomorphic modular (resp. cusp) forms of weight $k$ for $\G.$
The isomorphism $\phi$ is induced by the assignment of $f \in M_k$ to the map $\phi(f): \G \to P_{k-2}(\C)$ such that
\begin{equation}\label{ES}
\phi(f)(\gamma)=\int_i^{\gamma^{-1} i} f(w)(w-z)^{k-2}dw \qquad \text{for $\gamma \in \Gamma$.}
\end{equation}

We will now associate to the $F_k$'s of the last section a $1$-cocycle in the $\G$-module $P_{r+s-2k}(\C)$. 
We define it as the coboundary of a $0$-cochain in a larger module than $P_{-2k+r+s}(\C)$. The construction
follows the definition of the ``integral at a tangential base point at
infinity" of \cite{Br}, (Section 4).

For convenience of notation, we set $\eta_{r-s}(g; \zeta):=\eta_{r-s}(y^{\frac{r+s}{2}} g, R_{s-r, \mu_k}(\cdot, \zeta) )$

\begin{prop} Let $k \in \{0, \dots, \text{\rm min}(r, s) \}$ and $\mu_k=-k+(r+s+1)/2$. The function $v_k: \HH \to \C$ given by
$$v_k(\zeta):=\int_{\zeta}^{i \infty}\eta_{r-s}(\tilde F_k+ay^{k-r-s}; \zeta )+
\int_{\zeta}^{0}\eta_{r-s}(by^{1-k}; \zeta) +
\int_{\zeta}^{-i \infty}\eta_{r-s}( \mathring{F}_k; \zeta),$$
where the line of integration in the last integral includes the origin, is well-defined. 
The differential forms to be integrated in $v_k$ can be written more explicitly in the form
\begin{equation}\label{expl}
y^{k-1} \partial_r f(z)(\zeta-\bar z)^{s-k}(\zeta-z)^{r-k}dz+(s-k)y^{k-1} f(z)(\zeta-\bar z)^{s-k-1}(\zeta-z)^{r-k+1}d\bar z
\end{equation}
 for each smooth function $f: \HH  \cup \bar \HH \to \C$. 
\end{prop}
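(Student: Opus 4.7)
The plan is to split the verification into two parts. First, I would derive the explicit formula \eqref{expl} for $\eta_{r-s}(f;\zeta)=\eta_{r-s}(y^{(r+s)/2}f,R_{s-r,\mu_k}(\cdot,\zeta))$ by directly unfolding definition \eqref{MSel}. A short computation using $\partial y/\partial z=1/(2i)$ shows $\partial_{(r-s)/2}(y^{(r+s)/2}f)=y^{(r+s)/2}\partial_r f$, while Lemma \ref{Rprop}(1) together with our choice of $\mu_k$ gives $\bar\partial_{(r-s)/2} R_{s-r,\mu_k}(\cdot,\zeta)=(k-s)R_{s-r-2,\mu_k}(\cdot,\zeta)$. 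Substituting the explicit polynomial form $R_{s-r,\mu_k}(z,\zeta)=y^{k-(r+s)/2}(\zeta-z)^{r-k}(\zeta-\bar z)^{s-k}$ (and its shifted cousin, obtained by multiplying by $(\zeta-z)/(\zeta-\bar z)$) and accounting for the $1/y$ factor in the definitions of $\{\cdot,\cdot\}^\pm$ then produces exactly the two-term expression \eqref{expl}.

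Using \eqref{expl}, I would verify convergence of the three integrals in turn. For $\int_\zeta^{i\infty}\eta_{r-s}(\tilde F_k+ay^{k-r-s};\zeta)$, I would integrate along a vertical ray $z=\zeta+it$. The piece from $\tilde F_k$ decays exponentially by the Fourier expansion in Lemma \ref{HM!}, so its integrand is $O(P(t)e^{-ct})$ for some polynomial $P$ and $c>0$. The constant-term piece $ay^{k-r-s}$, when substituted into \eqref{expl}, produces two summands of the form $(k-s)ay^{2k-r-s-1}(\zeta-z)^{r-k}(\zeta-\bar z)^{s-k}\,dz$ and $(s-k)ay^{2k-r-s-1}(\zeta-z)^{r-k+1}(\zeta-\bar z)^{s-k-1}\,d\bar z$, each of order $t^{-1}$ at infinity; but along the vertical direction $d\bar z=-dz$, and expanding the combined integrand shows its leading coefficient has a factor $i^{s-k-1}(i^2+1)=0$, leaving an integrand of order $t^{-2}$, which is integrable at infinity. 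The middle integral $\int_\zeta^0\eta_{r-s}(by^{1-k};\zeta)$ is immediate: substituting $f=y^{1-k}$ into \eqref{expl} with $\partial_r(y^{1-k})=(r+1-k)y^{1-k}$ cancels all $y$-factors, producing a smooth polynomial form in $z,\bar z$ which is trivially integrable along any compact path from $\zeta$ to $0$.

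The third integral $\int_\zeta^{-i\infty}\eta_{r-s}(\mathring F_k;\zeta)$ is the delicate one. The expansion of $\mathring F_k$ from Lemma \ref{HM!} shows it is a finite sum of terms $y^j q^m$ and $y^j\bar q^m$ with $m<0$ and $k_0\le j\le -s$ (resp.\ $-r$); this extends smoothly to $\C\setminus\R$, and as $\text{Im}(z)\to-\infty$ each such term decays exponentially, so the behaviour at infinity is controlled. Near the origin, however, the $y^j$ factors with $j<0$ produce singularities, so the crux is to show that the two terms of \eqref{expl} combine along a path through $0$ in such a way that the singular contributions cancel. I would parametrise the path as $z=it$ near $0$ so that $d\bar z=-dz$, and exploit the $y$-power structure dictated by $\mathring F_k$ being an eigenfunction of $\Omega_{r-s}$ (Proposition \ref{Fk}); the eigen-property constrains the coefficients of the $y^j$ terms strongly enough that the leading singular pieces in the combined integrand cancel, exactly as in the cancellation for the first integral.

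The main obstacle is precisely this last step: controlling the singularity at $y=0$ in the third integral requires careful bookkeeping of cancellations that rely on the full eigenfunction structure of $\mathring F_k$ rather than merely on its decay or growth rate. As a fallback, one could appeal to the closedness of $\eta_{r-s}(\mathring F_k;\zeta)$ on $\HH\cup\bar\HH$ (Proposition \ref{MSmodint}) to deform the path so as to bypass the singular locus while preserving the value of the integral, thereby defining $v_k(\zeta)$ as a limit of path integrals of a closed form in the spirit of the tangential base point construction of \cite{Br}.
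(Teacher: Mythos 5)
Your derivation of the explicit formula \eqref{expl} (unfolding \eqref{MSel} via $\partial_{(r-s)/2}(y^{(r+s)/2}f)=y^{(r+s)/2}\partial_r f$ and Lemma \ref{Rprop}(1)) and your treatment of the first two integrals are correct and essentially identical to the paper's, including the observation that the two $O(t^{-1})$ contributions of $ay^{k-r-s}$ cancel along the vertical ray, leaving an integrable $O(t^{-2})$ remainder.

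The gap is in your third step. The cancellation at the origin that you hope for does not occur. Take a single term $y^jq^m$ of $\mathring{F}_k$ ($m<0$, $k_0\le j\le -s$, $k_0=k-r-s$); then $\partial_r(y^jq^m)=(j+r)y^jq^m-4\pi m\,y^{j+1}q^m$, and along $z=it$, $t\to 0$, the two summands of \eqref{expl} combine to
\begin{equation*}
i\,t^{k+j-1}\,\zeta^{r+s-2k}\bigl((j+r)-(s-k)\bigr)\,dt+O(t^{k+j}),
\end{equation*}
whose coefficient for the bottom exponent $j=k_0$ equals $2(k-s)\sum_m a_m^{(k_0)}$ and is not zero in general, while $k+k_0-1=2k-r-s-1\le -1$. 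So the integrand has a genuinely non-integrable singularity at $0$; the eigenfunction property of $\mathring F_k$ does not force the vanishing you would need (the cancellation for $ay^{k-r-s}$ at $i\infty$ is a coincidence of that particular monomial, not a general mechanism). Your fallback also fails: $y^j$ with $j<0$ is singular on the whole real line, not just at the origin, and every path from $\HH$ to $-i\infty$ must cross $\R$, so closedness cannot be used to deform around the singular locus. The way the paper actually handles this (stated in Section \ref{Lfun} and only implicit in the proof of this proposition) is that the integral ``through the origin'' is not a convergent improper integral at all but a \emph{regularised} one in the sense of \cite{Br}: after expanding $\mathring F_k$ into its finitely many terms $y^jq^m$, $y^j\bar q^m$, each contribution is the incomplete Gamma expression of type \eqref{rigo}, defined for all parameters by analytic continuation of $\Gamma(r,z)$ in $r$. ``Well-defined'' for the third integral means precisely this regularised value (together with closedness on $\HH$ and on $\bar\HH$, which the paper proves separately, to get path-independence away from $\R$); your proof needs to replace the claimed cancellation by this interpretation.
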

\begin{proof} We first show the second assertion. With the definition of $\eta_{r-s}$ and Lem. \ref{Rprop} we have:
\begin{multline*}\eta_{r-s}(y^{\frac{r+s}{2}}f, R_{s-r, \mu_k}(\cdot, \zeta) )=
\left ( 2i y \frac{\partial}{\partial z}\left ( y^{\frac{r+s}{2}}f(z)\right )+\frac{r-s}{2}y^{\frac{r+s}{2}}f(z)\right ) R_{s-r, \mu_k}(\cdot, \zeta) \frac{dz}{y}\\
- y^{\frac{r+s}{2}}f(z)\left (\frac{1-2 \mu_k -s+r}{2}\right ) R_{s-r-2, \mu_k}(\cdot, \zeta) \frac{d\bar z}{y}=\\
\left (r y^{\frac{r+s}{2}}f(z)+2iy^{\frac{r+s}{2}+1}\frac{\partial f}{\partial z}(z)\right ) R_{s-r, \mu_k}(\cdot, \zeta) \frac{dz}{y}
- y^{\frac{r+s}{2}}f(z)\left (\frac{1-2 \mu_k -s+r}{2}\right ) R_{s-r-2, \mu_k}(\cdot, \zeta) \frac{d\bar z}{y}
\end{multline*}
Substituting the value for $\mu_k$ we get \eqref{expl}.
From this we deduce that, if $f$ and $\partial f/\partial z$ decay exponentially as $y \to \infty$, the same holds for $\eta_{r-s}(y^{\frac{r+s}{2}}f, R_{s-r, \mu_k}(\cdot, \zeta) )$. This condition holds for $f=\tilde F_k.$ It also holds for $f=\mathring{F_k}$ as $y \to -\infty.$ 

The term corresponding to $ay^{k-r-s}$ in the first integral is $O(y^{-2})$ as $y \to \infty$, which assures convergence. (Note that each of the two summands in \eqref{expl} individually has a term of order $y^{-1}$ but they cancel each other out on the upper imaginary axis). 

Since it is clear that the second integral in the definition of $v_f$ is convergent too, we deduce that, for each $\zeta \in \C$, all integrals are convergent.

Further, by Prop. \ref{MSmodint}, $\eta_{r-s}(\tilde F_k+ay^{k-r-s}; \zeta) $,   $\eta_{r-s}(by^{1-k}; \zeta)$ and
$\eta_{r-s}( \mathring{F}_k, \zeta )$ are closed in $\HH.$ The last form is also closed in $\bar \HH.$ Indeed, for each fixed $\zeta \in \C$, by \eqref{expl}, we have that $d(\eta_{r-s}( \mathring{F}_k, \zeta ))=P[e^{-2 \pi i z}]dz \wedge d \bar z$, where $P$ is a polynomial in $e^{-2 \pi i z}$ whose  coefficients are polynomials in $z, \bar z$. Since $\eta_{r-s}( \mathring{F}_k, \zeta )$
is closed in $\HH,$ each of those polynomials are identically zero in $\HH$ and therefore, they vanish in $\bar \HH$ too. 
\end{proof}

We can now define the $1$-cocyle $\sigma_k$ on $\G$ as
$$\sigma_k=d^0 v_k.$$
We will show that, although $v_k$ does not belong to $P_{r+s-2k}(\C),$ its differential does and, in fact, it belongs to a cohomology class analogous to that of \eqref{ES} in the classical Eichler cohomology.

\begin{prop} \label{sigma}Let $F \in \MI \cap \M_{r, s}$. For $k \in \{0, \dots, \text{\rm min}(r, s)\},$ let $F_k$ be the $k$-th term in the decomposition of $F$ in eigenfunctions of $\Delta_{r, s}$ as in Prop. \ref{char}.  Then \newline
1. The map $\sigma_k$ induces a $1$-cocycle in $P_{r+s-2k}(\C)$. \newline
2. Let $\tilde \sigma_k: \G \to P_{r+s-2k}(\C)$ be the map given by 
$$\tilde \sigma_k(\g)(\zeta)=\int_i^{\g^{-1} i} \eta_{r-s}(y^{\frac{r+s}{2}}  F_k, R_{s-r, \mu_k}(\cdot, \zeta) ).$$
This gives a $1$-cocycle which belongs to the same cohomology class as $\sigma_k$.
\end{prop}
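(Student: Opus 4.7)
The plan is to introduce an auxiliary polynomial $u\in P_{r+s-2k}(\C)$ realising the identity $\sigma_k=d^0u+\tilde\sigma_k$; both parts of the proposition then fall out simultaneously. Define
$$u(\zeta):=\int_i^{i\infty}\omega_k(\tilde F_k+ay^{k-r-s};\zeta)+\int_i^0\omega_k(by^{1-k};\zeta)+\int_i^{-i\infty}\omega_k(\mathring F_k;\zeta),$$
where $\omega_k(g;\zeta)$ is the explicit form of \eqref{expl} (equivalently $\eta_{r-s}(y^{(r+s)/2}g, R_{s-r,\mu_k}(\cdot,\zeta))$). Convergence of each integral is verified exactly as in the preceding proposition for $v_k$, the only change being that the variable endpoint $\zeta$ is replaced by the fixed base point $i$. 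Because the integrands are polynomials in $\zeta$ of degree at most $r+s-2k$ and the contours no longer depend on $\zeta$, one has $u\in P_{r+s-2k}(\C)$.

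Splitting each of the three integrals in $v_k(\zeta)$ at $i$ and invoking the decomposition $F_k=\tilde F_k+ay^{k-r-s}+by^{1-k}+\mathring F_k$ from Lemma \ref{HM!} and Prop. \ref{char}, one obtains
$$v_k(\zeta)=u(\zeta)-h(\zeta),\qquad h(\zeta):=\int_i^{\zeta}\omega_k(F_k;\zeta).$$
To evaluate $d^0h$, combine Lemma \ref{MSprop}(1), the $|_{r-s}\gamma$-invariance of $y^{(r+s)/2}F_k$ (Prop. \ref{Fk}), the transformation of $R_{s-r,\mu_k}$ (Lemma \ref{Rprop}(2)), and the identity $1-2\mu_k=2k-r-s$ to obtain the pullback formula
$$\omega_k(F_k;\gamma\zeta)\circ\gamma=(c\zeta+d)^{2k-r-s}\,\omega_k(F_k;\zeta).$$
Changing variables $z=\gamma w$ in $h(\gamma\zeta)$ then gives $(h\underset{2k-r-s,0}{||}\gamma)(\zeta)=\int_{\gamma^{-1}i}^{\zeta}\omega_k(F_k;\zeta)$, and therefore
$$d^0h(\gamma)(\zeta)=-\int_i^{\gamma^{-1}i}\omega_k(F_k;\zeta)=-\tilde\sigma_k(\gamma)(\zeta).$$

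Combining yields $\sigma_k=d^0v_k=d^0u+\tilde\sigma_k$, which delivers everything. For part 1: $\tilde\sigma_k(\gamma)(\zeta)$ is manifestly a polynomial in $\zeta$ of degree at most $r+s-2k$ (polynomial integrand, $\zeta$-independent contour), adding the coboundary $d^0u$ of an element of $P_{r+s-2k}(\C)$ remains inside that module, and $\sigma_k$ inherits the cocycle property from being a coboundary in the ambient module of smooth functions on $\C$ under the same $\underset{2k-r-s,0}{||}$-action, which in turn forces $\tilde\sigma_k=\sigma_k-d^0u$ to be a cocycle as well. For part 2: the identity $\sigma_k-\tilde\sigma_k=d^0u$ with $u\in P_{r+s-2k}(\C)$ is exactly the statement of cohomological equality. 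The one genuinely delicate point, inherited from the construction of $v_k$ itself, is the bookkeeping of convergence and the pointwise cancellation of the formally $y^{-1}$-singular contributions of the constant-term pieces $ay^{k-r-s}$ and $by^{1-k}$ along the imaginary axis; once those integrals are shown to be well-defined, the cohomological content is formally immediate.
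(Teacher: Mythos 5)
Your proof is correct and follows essentially the same route as the paper: both split the integrals defining $v_k$ at the base point $i$, reassemble the pieces anchored at $\zeta$ into $\int_\zeta^i\eta_{r-s}(F_k;\zeta)$ using $F_k=\tilde F_k+ay^{k-r-s}+by^{1-k}+\mathring F_k$, identify the remaining $\zeta$-independent-contour integrals as a polynomial $u\in P_{r+s-2k}(\C)$, and derive the pullback/equivariance formula from Lemma \ref{MSprop}(1), Prop.~\ref{Fk} and Lemma \ref{Rprop}(2) to conclude $\sigma_k=\tilde\sigma_k+d^0u$. The only cosmetic difference is that you package the $\zeta$-anchored piece as $h(\zeta)=\int_i^\zeta$ rather than $\int_\zeta^i$, which changes nothing.
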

\begin{proof} We occasionally use again the abbreviation  $\eta_{r-s}(g; \zeta):=\eta_{r-s}(y^{\frac{r+s}{2}} g, R_{s-r, \mu_k}(\cdot, \zeta) )$. 

Since $\eta_{r-s}( g; \zeta)$ are closed for $g=\tilde F_k, ay^{k-r-s}, by^{1-k}$ and $\mathring{F}_k$, we have
\begin{multline}\label{cocycle0}
 v_k(\zeta)=\int_{\zeta}^i\eta_{r-s}(F_k-by^{1-k}-\mathring{F}_k; \zeta )+
\int_i^{i \infty}\eta_{r-s}( \tilde F_k+ay^{k-r-s}; \zeta)+\int_{\zeta}^0\eta_{r-s}(by^{1-k}; \zeta )\\
+\int_{\zeta}^{-i \infty}\eta_{r-s}( \mathring{F}_k; \zeta )=\\
\int_{\zeta}^i\eta_{r-s}(  F_k; \zeta )
+
\int_i^{i \infty}\eta_{r-s}( \tilde F_k+ay^{k-r-s}; \zeta )+\int_i^0\eta_{r-s}(by^{1-k}; \zeta)
+\int_i^{-i \infty}\eta_{r-s}( \mathring{F}_k; \zeta )
\end{multline}
where the last integral is also taken to be over a path that includes the origin.
By \eqref{expl}, the last three terms of \eqref{cocycle0} are clearly in $P_{r+s-2k}(\C)$. (However, note that to reach this conclusion, we first fix a specific path of integration and only then expand the integrand in $\zeta$. If we first expanded, there would be no guarantee {\it a priori} that the resulting differentials are closed). Thus, the image of those integrals under the action by $\underset{2k-r-s, 0}{||}(\gamma-1)$ is in $P_{r+s-2k}(\C)$ too.
  
To show that $\int_{\zeta}^i\eta_{r-s}(  F_k; \zeta )\underset{2k-r-s, 0}{||}(\gamma-1) \in P_{r+s-2k}(\C),$ we observe that, by Lemma \ref{MSprop} (1), we have, for each $\g \in \G$,
$$
\eta_{r-s}(y^{\frac{r+s}{2}}  F_k, R_{s-r, \mu_k}(\cdot, \g \zeta) ) \circ \g=
\eta_{r-s}((y^{\frac{r+s}{2}}  F_k)|_{r-s}\g, R_{s-r, \mu_k}(\cdot, \g \zeta)|_{s-r}\g )$$
where the action $|_{s-r}$ refers to the implied variable $z$. By Prop. \ref{Fk} and Lemma \ref{Rprop}(2), the last term
equals $\eta_{r-s}(y^{\frac{r+s}{2}}  F_k, R_{s-r, \mu_k}(\cdot, \zeta) )j(\g, \zeta)^{1-2\mu_k}$.
Therefore
\begin{align*}
&\left ( \int_{\zeta}^i\eta_{r-s}(y^{\frac{r+s}{2}}  F_k, R_{s-r, \mu_k}(\cdot, \zeta) )\right )\underset{1-2\mu_k, 0}{\Big | \Big |}\g= \nonumber
\left ( \int_{\g \zeta}^i\eta_{r-s}(y^{\frac{r+s}{2}}  F_k, R_{s-r, \mu_k}(\cdot, \g \zeta) )\right )j(\g, \zeta)^{2\mu_k-1}
\\=
&\left ( \int_{\zeta}^{\g^{-1} i}\eta_{r-s}(y^{\frac{r+s}{2}}  F_k, R_{s-r, \mu_k}(\cdot, \g \zeta)  ) \circ \g \right )j(\g, \zeta)^{2 \mu_k-1}= \int_{\zeta}^{\g^{-1} i} \eta_{r-s}(y^{\frac{r+s}{2}}  F_k, R_{s-r, \mu_k}(\cdot, \zeta) )
\end{align*}
This implies that 
\begin{equation}\label{cocycle}
\left ( \int_{\zeta}^i\eta_{r-s}(y^{\frac{r+s}{2}}  F_k, R_{s-r, \mu_k}(\cdot, \zeta) )\right )\underset{1-2\mu_k, 0}{\Big | \Big |}(\g-1)= \int_i^{\g^{-1} i} \eta_{r-s}(y^{\frac{r+s}{2}}  F_k, R_{s-r, \mu_k}(\cdot, \zeta) )
\end{equation}
which is in $P_{r+s-2k}(\C).$ 

Therefore $\sigma_k(\gamma) \in P_{r+s-2k}(\C)$ and since, further, $\sigma_k$ is given as the differential of a $0$-cochain, we deduce $1$.

To derive $2.$ we see with \eqref{cocycle0} and \eqref{cocycle} that, for all $\g \in \G$ and $\zeta \in \HH,$
\begin{multline}\label{cohequiv}
\sigma_k(\gamma)(\zeta)=\tilde \sigma_k(\g)(\zeta)+\\
\left ( 
\int_i^{i \infty}\eta_{r-s}( \tilde F_k+ay^{k-r-s}; \zeta )+
\int_i^{-i \infty}\eta_{r-s}(\mathring{F}_k; \zeta )+
\int_i^{0}\eta_{r-s}(by^{1-k}; \zeta )
\right ) \underset{2k-r-s, 0}{\Big | \Big |}(\g-1).
\end{multline}
Since the last integrals belong to $P_{r+s-2k}(\C),$ we deduce, on the one hand, that $\tilde \sigma_k$ is a $1$-cocycle with coefficients in $P_{r+s-2k}(\C)$ and, on the other, that $\sigma_k$ and $\tilde \sigma_k$ differ by a $1$-coboundary in $P_{r+s-2k}(\C)$. Therefore the belong to the same cohomology class.
\end{proof}

As in the case of the classical period polynomial, the value of this cocycle at the involution 
$S$
encapsulates the critical values of the L-functions of $F_k$. However, in the general case, its leading and constant terms must be ``truncated".

\begin{thm} \label{Lperiod}Assume that $r \equiv s \mod 4.$ Then,
$$\sigma_k(S)(\zeta)-\sigma_k^0(S)(\zeta)=i \sum_{l=1}^{s+r-2k-1} \left ( \sum_{n=0}^l \alpha_n \right ) L^*_{F_k}(k+l) \zeta^l$$
where $\sigma_k^0(S)(\zeta)$ denotes the sum of the leading and constant term of $\sigma_k(\zeta)$ and
$$\alpha_n=i^{-l-2n} \left ( (r-k) \binom{s-k+1}{l-n} \binom{r-k+1}{n}-(s-k) \binom{s-k-1}{l-n} \binom{r-k+1}{n} \right )$$
\end{thm}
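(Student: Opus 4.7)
The starting point is formula \eqref{cohequiv} from Prop.~\ref{sigma} applied to $\gamma=S$. Since $Si=i$, the term $\tilde\sigma_k(S)(\zeta)=\int_i^{S^{-1}i}\omega_k(F_k;\zeta)$ vanishes, so
\begin{equation*}
\sigma_k(S)(\zeta)=\bigl[J_1(\zeta)+J_2(\zeta)+J_3(\zeta)\bigr]\underset{2k-r-s,\,0}{\Big|\Big|}(S-1),
\end{equation*}
with $J_1=\int_i^{i\infty}\omega_k(\tilde F_k+ay^{k-r-s};\zeta)$, $J_2=\int_i^{-i\infty}\omega_k(\mathring F_k;\zeta)$ and $J_3=\int_i^0\omega_k(by^{1-k};\zeta)$, where $\omega_k(f;\zeta)$ denotes the explicit one-form in \eqref{expl}.

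I then apply $\underset{2k-r-s,0}{||}(S-1)$ to each $J_j$. Using Lemma~\ref{Rprop}(2), the modularity of $y^{(r+s)/2}F_k$ under $|_{r-s}$ from Prop.~\ref{Fk}, and closedness of the relevant forms (Prop.~\ref{MSmodint}), the substitution $z\to-1/z$ carries $J_j|| S$ onto an integral from $i$ to $S\cdot(\text{endpoint})$---namely to $0$, $0$ and $i\infty$ for $J_1$, $J_2$, $J_3$ respectively. Combining the differences $J_j||(S-1)$ yields
\begin{equation*}
\sigma_k(S)(\zeta)=-\int_0^{i\infty}\omega_k(\tilde F_k;\zeta)-\int_0^{-i\infty}\omega_k(\mathring F_k;\zeta)+\sigma_k^0(S)(\zeta),
\end{equation*}
where $\sigma_k^0(S)(\zeta)$ collects the contributions of the polynomial pieces $ay^{k-r-s}$ and $by^{1-k}$. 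Because these are pure powers of $y$, direct substitution in \eqref{expl} shows they produce only the constant and leading monomials in $\zeta$, consistent with subtracting them as $\sigma_k^0(S)(\zeta)$.

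Next I parametrise $z=it$ and expand both $(\zeta-it)^{r-k}(\zeta+it)^{s-k}$ and $(\zeta-it)^{r-k+1}(\zeta+it)^{s-k-1}$ binomially in $\zeta$. The coefficient of $\zeta^l$ in the above integrals becomes a combination of Mellin integrals of the form $\int_0^{\pm\infty}t^{k+l-1}f(it)\,dt$ and $\int_0^{\pm\infty}t^{k+l-1}\partial_r f(it)\,dt$ for $f\in\{\tilde F_k,\mathring F_k\}$. The $\partial_r$-integrals are reduced to $f$-integrals by integration by parts: writing $\partial_r f=2iy\,\partial f/\partial z+rf$ and using $\tfrac{d}{dt}f(it)=i(f_z-f_{\bar z})(it)$, one rewrites the $dz$-piece of $\omega_k$ and combines it with the $d\bar z$-piece so that the $f_{\bar z}$ cross-terms cancel---a cancellation enforced by the closedness of $\omega_k$ on the imaginary axis for each eigencomponent. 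The resulting boundary contributions at $t=0,\infty$ are absorbed into $\sigma_k^0(S)$. This step is what converts the natural binomial factors $\binom{r-k}{n}$ and $\binom{s-k}{l-n}$ into the shifted ones $\binom{r-k+1}{n}$ and $\binom{s-k\pm1}{l-n}$ of $\alpha_n$, with respective prefactors $(r-k)$ and $-(s-k)$.

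The hypothesis $r\equiv s\mod 4$ ensures $i^{r-s}=1$, which is precisely the factor that pairs the two halves of $L^*_{F_k}(w)$ in \eqref{Lf}. Combined with the Mellin representation of $L^*_{F_k}$, the $\zeta^l$-coefficient assembles into $i\bigl(\sum_{n=0}^l\alpha_n\bigr)L^*_{F_k}(k+l)$ as claimed. The main obstacle is the integration-by-parts bookkeeping in the third paragraph: tracking signs arising from $dz=i\,dt$, $d\bar z=-i\,dt$ and the two halves of the imaginary axis, verifying that the cross-cancellations produce exactly the binomial shifts in the stated $\alpha_n$, and confirming that the boundary contributions assemble into $\sigma_k^0(S)(\zeta)$ without leaking into intermediate powers of $\zeta$.
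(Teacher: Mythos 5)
There is a genuine gap in your second paragraph, and it is the central step. You claim that applying $\underset{2k-r-s,0}{||}S$ to $J_1=\int_i^{i\infty}\eta_{r-s}(\tilde F_k+ay^{k-r-s};\zeta)$ and substituting $z\mapsto -1/z$ turns it into an integral of the \emph{same} form from $i$ to $0$, so that $J_1||(S-1)$ telescopes to $-\int_0^{i\infty}\eta_{r-s}(\tilde F_k;\zeta)$. This fails because the equivariance $\eta_{r-s}(y^{\frac{r+s}{2}}g,R_{s-r,\mu_k}(\cdot,\g\zeta))\circ\g=\eta_{r-s}(y^{\frac{r+s}{2}}g,R_{s-r,\mu_k}(\cdot,\zeta))j(\g,\zeta)^{1-2\mu_k}$ requires $y^{\frac{r+s}{2}}g$ to be $|_{r-s}$-invariant; that holds for $g=F_k$ (this is exactly how \eqref{cocycle} is proved) but \emph{not} for the individual pieces $\tilde F_k$, $\mathring F_k$, $ay^{k-r-s}$, $by^{1-k}$. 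The pull-back of $\eta_{r-s}(\tilde F_k;\zeta)$ under $S$ involves $(\,y^{\frac{r+s}{2}}\tilde F_k)|_{r-s}S\neq y^{\frac{r+s}{2}}\tilde F_k$, so no telescoping occurs. Even if it did, your intermediate formula would be meaningless: by Lemma \ref{fbound}-type estimates $\tilde F_k(it)$ can grow like $e^{C/t}$ as $t\to 0$, so $\int_0^{i\infty}\eta_{r-s}(\tilde F_k;\zeta)$ diverges at the lower endpoint, and it is also structurally incompatible with the definition \eqref{Lf} of $L^*_{F_k}$, which consists of four integrals all based at $t=1$ (two with exponent $w$, two with exponent $r+s-w$) rather than a single Mellin transform over $(0,\infty)$.

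The paper's route keeps every integral based at $i$ and applies $||(S-1)$ \emph{algebraically to the polynomial in $\zeta$}: each $J_j(\zeta)$ is an element of $P_{r+s-2k}(\C)$, and $(J_j||(S-1))(\zeta)=J_j(-1/\zeta)\zeta^{r+s-2k}-J_j(\zeta)$ is computed by transforming only the kernel, producing $R(t,\zeta)=(\text{kernel at }-1/\zeta)\cdot\zeta^{r+s-2k}-(\text{kernel at }\zeta)$. The binomial expansion of $R(t,\zeta)$ in $\zeta$ then yields the paired exponents $t^{k+l}+i^{s-r}t^{s+r-k-l}$, which is precisely how the symmetrized $L^*_{F_k}(k+l)$ (with all integrals from $t=1$) emerges. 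Your third paragraph's integration-by-parts analysis (the identity \eqref{intparts}, the shifted binomials with prefactors $(r-k)$ and $-(s-k)$, the boundary terms at $z=i$) and your remark on $i^{r-s}=1$ are consistent with the paper --- though the condition $r\equiv s\bmod 4$ is also needed to cancel the intermediate-degree contributions of the boundary terms $2ih(i)(\zeta+i)^{s-k}(\zeta-i)^{r-k}$ against $2i((\zeta+i)^{s-k}(\zeta-i)^{r-k}-1)$ --- but the argument as written does not go through until the telescoping step is replaced by the algebraic action of $S-1$ on the $\zeta$-polynomials.
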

\begin{proof}
We notice, with \eqref{cohequiv} that $\sigma_k(S)(\zeta)$ equals
\begin{equation}\label{cohequiv1}
\left ( 
\int_i^{i \infty}\eta_{r-s}(\tilde F_k+ay^{k-r-s}; \zeta) +
\int_i^{-i \infty}\eta_{r-s}(\mathring{F}_k; \zeta)+
\int_i^0\eta_{r-s}(by^{1-k}; \zeta)
)\right ) \underset{2k-r-s, 0}{\Big | \Big |}(S-1)
\end{equation}
Now, each smooth function $h: \HH  \cup \bar \HH \to \C$, we deduce from \eqref{expl} that 
\begin{multline}\label{intparts}
\eta_{r-s}(h; \zeta)=2i\frac{\partial}{\partial z} \left ( y^k h(z)(\zeta-\bar z)^{s-k} (\zeta-z)^{r-k} \right)+(r-k)h(z)
(\zeta-\bar z)^{s-k+1} (\zeta-z)^{r-k-1}\\
-(s-k)h(z)(\zeta-\bar z)^{s-k-1} (\zeta-z)^{r-k+1}
 \end{multline}
We further notice that $\eta_{r-s}( y^{k-r-s}; 0)=0$.
Therefore,  $\eta_{r-s}( y^{k-r-s}; \zeta)=\eta_{r-s}( y^{k-r-s}; \zeta)- \eta_{r-s}( y^{k-r-s}; 0)$  equals
\begin{multline}\label{intparts1}
2i\frac{\partial}{\partial z} \left ( y^k \cdot  y^{k-r-s} \left ( (\zeta-\bar z)^{s-k} (\zeta-z)^{r-k}- \bar z^{s-k} z^{r-k}\right ) \right)\\
+(r-k)\cdot  y^{k-r-s}
\left ( (\zeta-\bar z)^{s-k+1} (\zeta-z)^{r-k-1}- \bar z^{s-k+1} z^{r-k-1}\right )\\
-(s-k)\cdot  y^{k-r-s}\left ( (\zeta-\bar z)^{s-k-1} (\zeta-z)^{r-k+1}- \bar z^{s-k-1} z^{r-k+1}\right ).
 \end{multline}
Each of the polynomials in $\zeta$ have degree $\le s+r-2k-1$. Hence the integral 
$\int_i^{i \infty}\eta_{r-s}(y^{k-r-s}; \zeta)$ converges and, by integrating along the positive imaginary axis, we deduce that it equals
\begin{multline}\label{yterm} i \int_1^{\infty} y^{2k-r-s-1}  \Big [ (r-k) \left ( (\zeta+ iy)^{s-k+1}(\zeta-iy)^{r-k-1}- (iy)^{s-k+1}(-iy)^{r-k-1} \right) -\\
 (s-k) \left ( (\zeta+ iy)^{s-k-1}(\zeta-iy)^{r-k+1} - (iy)^{s-k+1}(-iy)^{r-k-1} \right ) \Big ] dy
-2i ( (\zeta+i)^{s-k}(\zeta-i)^{r-k} -1)
\end{multline}
Equ. \eqref{intparts} can be used directly for $h=\tilde F_k, \mathring{F}_k, by^{1-k}$, to yield
\begin{multline*}\label{}
 \int_i^{i \infty}\eta_{r-s}(y^{\frac{r+s}{2}} h, R_{s-r, \mu_k}(\cdot, \zeta) )=(r-k)i
 \int_1^{\infty} t^{k-1} h(it) (\zeta+ it)^{s-k+1}(\zeta-it)^{r-k-1}dt\\
-(s-k)i  \int_1^{\infty} t^{k-1} h(it) (\zeta+ it)^{s-k-1}(\zeta-it)^{r-k+1}dt-2ih(i)(\zeta+i)^{s-k}(\zeta-i)^{r-k}.
\end{multline*}
 Therefore, with \eqref{cohequiv1} and \eqref{yterm} we deduce that  $\sigma_k(S)(\zeta)-\sigma^0_k(S)(\zeta)$ equals
\begin{multline}\label{expl2}
i \int_1^{\infty} t^{k-1} \left ( \tilde F_k(it)+at^{k-r-s} \right )  (R(t, \zeta)-R_0(t, \zeta)) dt+ 
i \int_1^{-\infty} t^{k-1} \mathring{F}_k(it) (R(t, \zeta) -R_0(t, \zeta))dt \\
+
i \int_1^0 t^{k-1} \left ( bt^{1-k} \right )(R(t, \zeta) -R_0(t, \zeta))dt
\end{multline}
where
\begin{multline*} R(t, \zeta):=  (r-k) \left ( 
(1-it \zeta)^{s-k+1}(1+i t \zeta)^{r-k-1}-(\zeta+it)^{s-k+1}( \zeta-it)^{r-k-1} \right ) 
\\
-(s-k)
\left (  (1-it \zeta)^{s-k-1}(1+i t \zeta)^{r-k+1}-(\zeta+it)^{s-k-1}(\zeta-it)^{r-k+1} \right ) 
\end{multline*}
and $R_0(t, \zeta)$ is the sum of the constant and the leading term of the expansion of $R(t, \zeta)$ in $\zeta$.
(The terms  of order between $1$ and $s+r-2k-1$ coming from $2ih(i)(\zeta+i)^{s-k}(\zeta-i)^{r-k}$ and $2i ( (\zeta+i)^{s-k}(\zeta-i)^{r-k} -1)$ cancel because $s \equiv r \mod 4.$) 

Using the binomial expansion, we see that the integral in the RHS of \eqref{expl2} equals, in the notation of the statement of the proposition:

\begin{multline*} \sum_{l=1}^{s+r-2k-1} \left ( \sum_{n=0}^l \alpha_n  \right ) \left ( \int_1^{\infty} \left ( \tilde F_k(it)+at^{k-r-s} \right ) \left ( t^{k+l}+i^{s-r}t^{s+r-k-l}\right ) \frac{dt}{t} + \right. \\ 
\left. \int_1^{-\infty}  \mathring{F}_k(it) \left ( t^{k+l}+i^{s-r}t^{s+r-k-l}\right ) \frac{dt}{t}  +
 \int_1^{0} bt^{1-k}  \left ( t^{k+l}+i^{s-r}t^{s+r-k-l}\right ) \frac{dt}{t}  
\right ) \zeta^l.
\end{multline*}  With the definition of $L_{F_k}^*$ we deduce the proposition.
\end{proof}
 
From Prop.\ref{sigma} we obtain a map from the space of modular iterated integrals of length one to a direct sum of copies of the space of classical modular (resp. cusp) forms. 

\begin{thm} The maps $\sigma_k$ ($k \in \{0, \dots, \text{\rm min}(r, s)\}$) defined in Prop. \ref{sigma} induce a map
$$\MI \cap \M^!_{r, s}\to \bigoplus_{k=0}^{\text{\rm min}(r, s)} H^1(\G, P_{r+s-2k}(\C)) \cong  \bigoplus_{k=0}^{\text{\rm min}(r, s)} 
\left ( \bar S_{r+s-2k+2} \oplus M_{r+s-2k+2} \right ).$$
\end{thm}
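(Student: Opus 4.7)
The theorem is essentially a packaging of the results already established. My plan is to define the map $\Phi: \MI \cap \M^!_{r,s} \to \bigoplus_{k=0}^{\min(r,s)} H^1(\G, P_{r+s-2k}(\C))$ componentwise by $\Phi(F) = ([\sigma_0], [\sigma_1], \dots, [\sigma_{\min(r,s)}])$, where $\sigma_k$ is the $1$-cocycle attached to $F$ in Prop. \ref{sigma}, and then apply the Eichler-Shimura isomorphism in each summand to obtain the claimed target. The real content is verifying that this prescription is well-defined (i.e.\ depends only on $F$, not on auxiliary choices) and that the Eichler-Shimura identification applies.

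First, I would check well-definedness. Given $F \in \MI \cap \M^!_{r,s}$, Prop.\ \ref{char} provides a \emph{unique} decomposition $F = F_0 + \dots + F_{\min(r,s)}$ into Laplacian eigenfunctions, and for each such $F_k$, Lemma \ref{HM!} gives a unique decomposition $F_k = F_k^0 + F_k^h + F_k^a$ with uniquely determined constants $a, b$ in $F_k^0 = ay^{k-r-s}+by^{1-k}$; this in turn uniquely determines $\tilde F_k$ and $\mathring F_k$ via \eqref{tilde}, \eqref{ring}. Thus the $0$-cochain $v_k$ is a canonical function of $F$, and so is its coboundary $\sigma_k = d^0 v_k$. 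By Prop.\ \ref{sigma}(1), each $\sigma_k$ takes values in $P_{r+s-2k}(\C)$, so it represents a well-defined class in $H^1(\G, P_{r+s-2k}(\C))$. Linearity of $\Phi$ follows because the decompositions in Prop.\ \ref{char} and Lemma \ref{HM!} are linear in $F$, and the integrals defining $v_k$ depend linearly on their integrands.

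Next, to identify each summand with $\bar S_{r+s-2k+2} \oplus M_{r+s-2k+2}$, I would invoke the Eichler-Shimura isomorphism \eqref{ESmap} with weight $r+s-2k+2$; note that since $0 \le k \le \min(r,s)$ we have $r+s-2k \ge 0$, so $r+s-2k+2 \ge 2$ and the classical spaces $S_{r+s-2k+2}$, $M_{r+s-2k+2}$ are defined. Taking the direct sum of these isomorphisms over $k$ produces the stated target, and composing with $\Phi$ yields the asserted map.

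I do not anticipate a real obstacle: the bulk of the work (well-definedness of $v_k$, the cocycle property of $\sigma_k$, the value-space being $P_{r+s-2k}(\C)$) has already been carried out in Prop.\ \ref{char}, Lemma \ref{HM!}, and Prop.\ \ref{sigma}. The only point that deserves a brief sentence is that the Eichler-Shimura isomorphism applies even when $r+s-2k = 0$, in which case $P_0(\C) = \C$ carries the trivial $\G$-action and $H^1(\G, \C)$ matches $\bar S_2 \oplus M_2 = M_2$ (with $S_2 = 0$), which is consistent with the general formula.
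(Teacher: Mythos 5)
Your proposal is correct and follows essentially the same route as the paper: the map is defined by $F \mapsto ([\sigma_0],\dots,[\sigma_{\min(r,s)}])$ using the cocycle classes from Prop. \ref{sigma}, and the target is identified via the Eichler--Shimura isomorphism \eqref{ESmap}. Your additional remarks on well-definedness (via the uniqueness in Prop. \ref{char} and Lemma \ref{HM!}) and linearity are sound elaborations of what the paper leaves implicit.
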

\begin{proof} Prop. \ref{sigma} induces a map sending each $F \in \MI \cap \M^!_{r, s}$ to 
$$([\sigma_0], \dots [\sigma_{\text{\rm min}(r, s)}]) \in \bigoplus_{k=0}^{\text{\rm min}(r,s)} H^1(\G, P_{r+s-2k}(\C)).$$
Here $[\sigma_k]$ stands for the cohomology class of the $1$-cocycle defined in that proposition. The last isomorphism of the theorem follows from the Eichler-Shimura isomorphism (see \eqref{ESmap}).
\end{proof}


\begin{cor} Let $F $ be a modular iterated integral of length one and weights $(r, s)$ and let $F=F_0+\dots+F_{\text{\rm min}(r, s)}$ be its decomposition into eigenfunctions of the Laplacian. Then, for each $k \in \{0, \dots, \text{\rm min}(r, s)\}$, there is a $P_k(\zeta) \in P_{r+s-2k}(\C)$ and unique $f_k \in S_{r+s-2k+2}, g_k \in M_{r+s-2k+2}$ such that, for all $\g \in \G,$
\begin{multline}\int_i^{\g^{-1} i} \eta_{r-s}(y^{\frac{r+s}{2}}  F_k, R_{s-r, \mu_k}(\cdot, \zeta) )=
\int_i^{\g^{-1} i} f(z)(z- \zeta)^{r+s-2k}dz+
\overline{\int_i^{\g^{-1} i} g(z)(z-\bar \zeta)^{r+s-2k}dz}\\
+P_k\underset{2k-r-s, 0}{\Big | \Big |}(\g-1).
\end{multline}
\end{cor}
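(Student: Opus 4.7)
The plan is to combine the immediately preceding theorem with the Eichler--Shimura isomorphism \eqref{ESmap}. First, I apply that theorem to $F$: for each $k \in \{0, \dots, \min(r,s)\}$, the component $F_k$ produces the cohomology class $[\sigma_k] \in H^1(\G, P_{r+s-2k}(\C))$. By Proposition \ref{sigma}(2), this class is equally represented by the cocycle
$$\tilde \sigma_k(\g)(\zeta) = \int_i^{\g^{-1} i} \eta_{r-s}\bigl(y^{\frac{r+s}{2}} F_k, R_{s-r,\mu_k}(\cdot, \zeta)\bigr).$$

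Second, I match weights: since $P_{m-2}(\C) = P_{r+s-2k}(\C)$ requires $m = r+s-2k+2$, the Eichler--Shimura isomorphism \eqref{ESmap} at that weight gives a canonical identification
$$\bar S_{r+s-2k+2} \oplus M_{r+s-2k+2} \xrightarrow{\sim} H^1(\G, P_{r+s-2k}(\C)).$$
Applying its inverse to $[\tilde \sigma_k]$ yields unique $f_k \in S_{r+s-2k+2}$ and $g_k \in M_{r+s-2k+2}$ whose classes pair to $[\tilde \sigma_k]$. The holomorphic component contributes the classical Eichler cocycle $\g \mapsto \int_i^{\g^{-1}i} g_k(z)(z - \zeta)^{r+s-2k}\, dz$ as in \eqref{ES}, while the antiholomorphic component, obtained by complex conjugating the Eichler construction applied to $f_k$, contributes $\g \mapsto \overline{\int_i^{\g^{-1}i} f_k(z)(z - \bar \zeta)^{r+s-2k}\, dz}$.

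Third, the equality of cohomology classes means the difference between $\tilde \sigma_k$ and the sum of these two cocycles is a coboundary in $P_{r+s-2k}(\C)$: there exists a polynomial $P_k \in P_{r+s-2k}(\C)$ such that this difference equals $d^0 P_k(\g) = P_k \underset{2k-r-s, 0}{||}(\g - 1)$. Rearranging gives precisely the identity stated in the corollary.

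The only delicate point, and hence the main (mild) obstacle, is checking that the explicit form of the Eichler--Shimura map on the antiholomorphic factor $\bar S_{r+s-2k+2}$ really takes the shape written above, with $\bar\zeta$ inside the integrand and the overall bar outside. This is obtained by conjugating both sides of \eqref{ES} and tracking how the $\underset{-m,0}{||}$-action transforms under conjugation; the verification is routine but must be carried out carefully so that the resulting cocycle genuinely lives in $P_{r+s-2k}(\C)$ rather than in its conjugate. Once this bookkeeping is in place, the corollary follows immediately from the two previous results and standard cohomological algebra.
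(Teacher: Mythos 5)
Your argument is exactly the one the paper intends: the corollary is an immediate unpacking of the preceding theorem (the classes $[\tilde\sigma_k]=[\sigma_k]$ land in $H^1(\G,P_{r+s-2k}(\C))\cong \bar S_{r+s-2k+2}\oplus M_{r+s-2k+2}$ via \eqref{ESmap}), with the difference of cohomologous cocycles realized as the coboundary $P_k\underset{2k-r-s,0}{||}(\g-1)$. This matches the paper's (implicit) proof; the only quibble is a harmless labelling swap of which of $f_k\in S_{r+s-2k+2}$ and $g_k\in M_{r+s-2k+2}$ sits in the conjugated integral, an inconsistency already present in the corollary's own notation.
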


\section{An application to algebraicity}\label{alquest}
In \cite{BH} an Eichler-Shimura isomorphism for weakly holomorphic modular forms is proved, which respects rational structures. As a proof of concept for the ``correctness" of our definition of the L-function in Sect. \ref{Lfunctions} we will use the results of \cite{BH} to show an analogue of Manin's Periods Theorem \cite{M} for weakly holomorphic forms. It should be mentioned that K. Bringmann has shown us an alternative way, based on results of \cite{BGKO}, to establish a statement implying the same result. 

Before stating and proving our result, we first summarize the setup of \cite{BH} and then show that it is compatible with the explicit expressions for the cocycles of the last section. 

Let $M_{k, \mathbb Q}^!$, resp. $S_{k, \Q}^!$, denote the $\Q$-vector space of weight $k$ weakly holomorphic modular, resp. cusp, forms for $\Gamma=$SL$_2(\ZZ),$ with rational Fourier coefficients. Consider the differential operator $D=\frac{1}{2 \pi i}\frac{d}{dz}$. In \cite{G} it is shown that, although there are generally no Hecke eigenforms in  $S_{k, \Q}^!$, there are well-defined operators on $M_{k, \Q}^!/D^{k-1}M_{2-k, \Q}^!$ induced by the standard Hecke operators and, within that space, there are Hecke invariant classes. 
With this terminology and notation, we have 
\begin{thm} \label{BHES} (Cor. 1.3 of \cite{BH}) The map $\phi$ assigning to $f \in M_k^!$ the function \eqref{ES} induces a Hecke invariant isomorphism 
$$[\phi]: M_{k, \Q}^!/D^{k-1}M_{2-k, \Q}^! \otimes_{\mathbb Q} \C \xrightarrow{\sim}  H^1(\Gamma, P_{k-2}(\mathbb Q)) \otimes_{\mathbb Q} \C$$
\end{thm}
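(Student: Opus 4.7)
The stated theorem is imported verbatim from \cite{BH}, so the honest plan is to cite it and to sketch the shape of the argument one would follow to reprove it from scratch.

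The central algebraic input is Bol's identity, which says that $D^{k-1}(g\underset{2-k}{|}\gamma) = (D^{k-1}g)\underset{k}{|}\gamma$ for all $\gamma \in \Gamma$. In particular $D^{k-1}$ does send $M^!_{2-k,\Q}$ into $M^!_{k,\Q}$, so the quotient on the left is well-defined. The first step is to check that $\phi$ kills the image of $D^{k-1}$: given $g\in M^!_{2-k}$, integrating by parts $k-1$ times in the Eichler integral
$$\int_i^{\gamma^{-1} i} (D^{k-1}g)(w)(w-z)^{k-2}\,dw$$
collapses it to a polynomial in $z$ of degree $\le k-2$ plus boundary contributions at $i$ and $\gamma^{-1}i$ that together form a $0$-coboundary. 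Hence $\phi$ factors through the quotient, giving a well-defined map $[\phi]$.

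For the bijectivity over $\C$, the strategy is: (i) \emph{injectivity} -- if $\phi(f)$ is a coboundary $P\underset{2-k,0}{||}(\gamma-1)$ for some polynomial $P \in P_{k-2}(\C)$, then the Eichler integral of $f$ differs from $P$ by a $\Gamma$-invariant holomorphic function on $\HH$ with controlled growth, which one identifies with $D^{1-k}g$ for a unique $g\in M^!_{2-k}$; (ii) \emph{surjectivity} -- every class in $H^1(\Gamma,P_{k-2}(\C))$ arises from some weakly holomorphic form, either via a Poincar\'e-series construction or by exploiting the classical Eichler--Shimura isomorphism $\overline{S_k}\oplus M_k \xrightarrow{\sim} H^1(\Gamma,P_{k-2}(\C))$ and realizing the antiholomorphic summand through weakly holomorphic Eichler integrals. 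A dimension count (both sides are finite-dimensional after the quotient) then closes the argument.

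The main obstacle -- and the genuinely new content of \cite{BH} -- is the \emph{rational} structure: one must show that $[\phi]$ already descends to a $\Q$-linear isomorphism $M^!_{k,\Q}/D^{k-1}M^!_{2-k,\Q} \xrightarrow{\sim} H^1(\Gamma,P_{k-2}(\Q))$ before extending scalars to $\C$. This is achieved in \cite{BH} by interpreting both sides as algebraic de Rham cohomology groups of the modular curve (with a suitable divisor allowing poles at the cusp), after which rationality is automatic from the comparison between algebraic and topological de Rham theory. Hecke equivariance of $[\phi]$ is then a formal consequence, as Hecke correspondences act by algebraic maps on each side and commute with the comparison. The $\C$-linear version of the statement is relatively soft; the delicate point -- and the one I would expect to absorb the bulk of the work -- is the descent to $\Q$.
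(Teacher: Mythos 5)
The paper offers no proof of this statement: it is imported verbatim as Cor.~1.3 of \cite{BH}, exactly as you recognize, so citing \cite{BH} is the same (and the intended) approach. Your accompanying sketch of how \cite{BH} proceeds --- Bol's identity to make the quotient and the factorization of $\phi$ work, and the algebraic de Rham interpretation to obtain the $\Q$-structure and Hecke equivariance --- is accurate but goes beyond what the paper itself supplies.
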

The image of $S_{k, \Q}^!/D^{k-1}M_{2-k, \Q}^! \otimes_{\mathbb Q} \C$ is the \emph{parabolic} cohomology group defined by:
$$H^1_{\text{par}}(\Gamma, P_{k-2}(\mathbb C)):=Z^1_{\text{par}}(\Gamma, P_{k-2}(\mathbb C))/B^1_{\text{par}}(\Gamma, P_{k-2}(\mathbb C))$$
where $Z^1_{\text{par}}(\Gamma, P_{k-2}(\mathbb C)):=\{\psi \in Z^1(\Gamma, P_{k-2}(\mathbb C)); \psi(T)=0\}$ is the space of parabolic cocycles and $B^1_{\text{par}}(\Gamma, P_{k-2}(\mathbb C)):=\{\psi \in B^1(\Gamma, P_{k-2}(\mathbb C)); \psi(T)=0\}$ the space of parabolic coboundaries.
These spaces are defined over $\Q$ and $B^1_{\text{par}}(\Gamma, P_{k-2}(\mathbb C))$ is generated by $\psi_0$ such that
\begin{equation}\label{cbou}
\psi_0(\g)=1\underset{2-k, 0}{||}(\g-1).
\end{equation}

Further, let $F_{\infty}$ be the ``real Frobenius" induced by the map sending $\sigma \in Z^1_{\text{par}}(\Gamma, P_{k-2}(\mathbb C))$ to $F_{\infty} \sigma \in Z^1_{\text{par}}(\Gamma, P_{k-2}(\mathbb C))$ such that
$$F_{\infty}\sigma \left ( \begin{smallmatrix} a & b \\ c & d \end{smallmatrix} \right )(z)=\sigma \left ( \begin{smallmatrix} a & -b \\ -c & d \end{smallmatrix} \right )(-z).$$
Then $H^1_{\text{par}}(\Gamma, P_{k-2}(\mathbb C))$ is decomposed canonically into $F_{\infty}$-eigenspaces:
$$H^1_{\text{par}}(\Gamma, P_{k-2}(\mathbb C))=H^{1, +}_{\text{par}}(\Gamma, P_{k-2}(\mathbb C)) \oplus H^{1, -}_{\text{par}}(\Gamma, P_{k-2}(\mathbb C)).$$
Each class of $H^{1, +}_{\text{par}}(\Gamma, P_{k-2}(\mathbb C)) $ (resp. $H^{1, -}_{\text{par}}(\Gamma, P_{k-2}(\mathbb C)) $ is represented by a cocycle $\sigma$ such that $\sigma(S)$ is an even (resp. odd) polynomial.

Let now $\phi'$ be the map assigning to each $f \in S_{k, \Q}^! $ the cocyle $\sigma'_f=d^0v'_f$
where $v'_f:\frak H \to P_{k-2}(\C)$ is given by
$$v'_f(z):=\int_{i \infty}^{z}\tilde f(w)(w-z)^{k-2}dw+
\int_{-i \infty}^{z} \mathring{f}(w)(w-z)^{k-2}dw,$$
$\tilde f, \mathring{f}$ defined by \eqref{tilde}, \eqref{ring}, respectively. 

We will prove the proposition
\begin{prop} The map $\phi'$ induces a map $[\phi']$
from the space $S_k^!/D^{k-1}M_{2-k}^! \otimes_{\mathbb Q} \C$ to
$ H^1_{par}(\Gamma, P_{k-2}(\mathbb Q)) \otimes_{\mathbb Q} \C$. The resulting diagram
\begin{equation} \label{diag}
\begin{tikzcd}
M_k^!/D^{k-1}M_{2-k}^! \otimes_{\mathbb Q} \C \arrow{r}{[\phi]}  & H^1(\Gamma, P_{k-2}(\mathbb Q)) \otimes_{\mathbb Q} \C \\
S_k^!/D^{k-1}M_{2-k}^! \otimes_{\mathbb Q} \C \arrow[u, hook, "i"]
\arrow{r}{[\phi']} &   H^1_{par}(\Gamma, P_{k-2}(\mathbb Q)) \otimes_{\mathbb Q} \C  \arrow[u, hook, "j"]
\end{tikzcd}
\end{equation}
(where $i, j$ are natural injections and $\phi$ is as in Th. \ref{BHES}) is commutative.  
\end{prop}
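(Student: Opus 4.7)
The plan is to verify the three assertions packed into the proposition in order: (a) $\sigma'_f$ is a parabolic cocycle valued in $P_{k-2}(\C)$, (b) $\phi'$ descends modulo $D^{k-1}M^!_{2-k}$, and (c) the diagram commutes. Rationality of the image (landing in $P_{k-2}(\Q)\otimes\C$ rather than merely $P_{k-2}(\C)$) will be inherited from Theorem \ref{BHES} via (c).

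For (a) and (c), parabolicity $\sigma'_f(T)=0$ follows from the $q$-periodicity $\tilde f(w+1)=\tilde f(w)$, $\mathring f(w+1)=\mathring f(w)$: the substitution $w\mapsto w+1$ in both integrals of $v'_f(z+1)$ returns $v'_f(z)$ exactly. To address the cocycle property and commutativity simultaneously, decompose
$$v'_f(z)=C(z)+u_f(z),\qquad u_f(z):=\int_i^z f(w)(w-z)^{k-2}dw,$$
$$C(z):=\int_{i\infty}^i\tilde f(w)(w-z)^{k-2}dw+\int_{-i\infty}^i\mathring f(w)(w-z)^{k-2}dw,$$
where $\tilde f+\mathring f=f$ since $f\in S_k^!$ has vanishing constant term. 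The summand $C$ lies manifestly in $P_{k-2}(\C)$, so $d^0C$ is a coboundary in $B^1(\Gamma,P_{k-2}(\C))$. For $u_f$, the change of variable $w=\gamma w'$ together with modularity of $f$ and the identity $\gamma w'-\gamma z=(w'-z)/[(cw'+d)(cz+d)]$ collapses the Jacobian and automorphy factors to give $u_f(\gamma z)(cz+d)^{k-2}=\int_{\gamma^{-1}i}^z f(w)(w-z)^{k-2}dw$, whence $d^0u_f(\gamma)(z)=-\phi(f)(\gamma)(z)$ with $\phi(f)$ as in \eqref{ES}, also in $P_{k-2}(\C)$. This establishes (a); moreover, modulo the coboundary $d^0C$ we obtain $[\sigma'_f]=-[\phi(f)]$ in $H^1(\Gamma,P_{k-2}(\C))$, yielding commutativity of the diagram (c) up to the sign implicit in the conventions used for $\phi$ and $d^0$, and transferring rationality of the class from Theorem \ref{BHES} to the image of $[\phi']$.

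For (b), suppose $f=D^{k-1}g$ with $g\in M^!_{2-k}$. Bol's identity places $D^{k-1}g$ in $M^!_k$, and since differentiation annihilates the constant Fourier coefficient, in fact in $S_k^!$. Integrate each of $\int_{i\infty}^z(D^{k-1}\tilde g)(w)(w-z)^{k-2}dw$ and $\int_{-i\infty}^z(D^{k-1}\mathring g)(w)(w-z)^{k-2}dw$ by parts $k-1$ times; since $\tfrac{d^{k-1}}{dw^{k-1}}(w-z)^{k-2}\equiv0$, the integral terms drop out entirely and only boundary contributions remain. The contributions at $\pm i\infty$ vanish thanks to the exponential decay of $\tilde g$ (resp.\ $\mathring g$) and all its derivatives, while those at $w=z$ are polynomials in $z$ of degree at most $k-2$. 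Hence $v'_f$ itself already lies in $P_{k-2}(\C)$, so $\sigma'_f=d^0v'_f$ is a coboundary, and $[\phi']$ annihilates $D^{k-1}M^!_{2-k}$. The principal obstacle will be this boundary vanishing at $\pm i\infty$: because $g$ has negative weight $2-k$ with a possibly large polar part, one must verify at each intermediate stage that the exponential decay of the non-principal parts of $g$ and its $D$-derivatives dominates all polynomial factors $\tfrac{d^j}{dw^j}(w-z)^{k-2}$ uniformly on compact sets in $z$, so that every boundary evaluation at infinity is zero.
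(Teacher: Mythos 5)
Your treatment of parabolicity and of commutativity is essentially the paper's own proof: the authors likewise verify $\sigma'_f(T)=0$ from the periodicity of $\tilde f$ and $\mathring{f}$, and split $v'_f(z)=\int_i^z f(w)(w-z)^{k-2}dw + C(z)$ with $C\in P_{k-2}(\C)$, so that $\sigma'_f(\g)$ equals the Eichler cocycle $\phi(f)(\g)$ (up to the sign the paper also glosses over) plus the coboundary $C\underset{2-k,0}{||}(\g-1)$. The paper does not separately verify that $\phi'$ annihilates $D^{k-1}M^!_{2-k}$; that follows from the commutativity together with the injectivity of $j$ and Theorem \ref{BHES}, and you could have closed your step (b) the same way.

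Your direct argument for (b), however, contains a genuine error. After integrating by parts $k-1$ times, the boundary terms at $w=z$ are $(-1)^j D^{k-2-j}\tilde g(z)\cdot\frac{d^j}{dw^j}(w-z)^{k-2}\big|_{w=z}$; these vanish for $j<k-2$, and the single surviving term ($j=k-2$) is $\frac{(k-2)!}{(2\pi i)^{k-1}}\tilde g(z)$, and similarly for $\mathring g$. Hence
$$v'_{D^{k-1}g}(z)=\frac{(k-2)!}{(2\pi i)^{k-1}}\left(g(z)-g^0(z)\right),$$
which is a weakly holomorphic function of weight $2-k$, \emph{not} an element of $P_{k-2}(\C)$; your claim that ``$v'_f$ itself already lies in $P_{k-2}(\C)$'' fails, and with it the stated reason that $\sigma'_f$ is a coboundary. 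The conclusion is nevertheless salvageable by a different mechanism: the module action $\underset{2-k,0}{||}$ on $P_{k-2}(\C)$ is the restriction of the weight $2-k$ slash action, under which $g$ is invariant, so $d^0\bigl(c\,g\bigr)=0$ identically, while the constant-term piece contributes $-c\,g^0\underset{2-k,0}{||}(\g-1)$, a multiple of the parabolic coboundary $\psi_0$ of \eqref{cbou}. So $\sigma'_{D^{k-1}g}\in B^1_{\text{par}}$, as needed — but via modularity of $g$, not via polynomiality of $v'_{D^{k-1}g}$. You should correct this step or replace it by the deduction from commutativity.
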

\begin{proof} Let $f \in S_{k, \Q}^!.$ Then since both $\tilde f$ and $\mathring{f}$ are periodic with period $1$, it is easy to see that $\sigma'_f(T)=v'_f\underset{2-k, 0}{||}(T-1)(z)=v'_f(z+1)-v'_f(z)=0.$ In addition, $\sigma'_f$ is a cocycle by construction and therefore it is a parabolic cocycle. This proves the first assertion.
 
Now,
$$v'_f(z)  =
\int_i^zf(w)(w-z)^{k-2}dw+\int_{i \infty}^i \tilde f(w)(z-w)^{k-2} dw+ \int_{-i \infty}^i \mathring{f}(w)(z-w)^{k-2} dw$$
and thus
\begin{multline} \label{rE}  \sigma'_f(\g)
= \int_{\g^{-1}i}^if(w)( \cdot-w )^{k-2}dw+ 
\\
\left [ \int_{i \infty}^i \tilde f(w)(\cdot-w)^{k-2} dw+ \int_{-i \infty}^i \mathring{f}(w)(\cdot-w)^{k-2} dw \right ] \underset{2-k, 0}{||}(\g-1).
\end{multline}
Since the term inside the square brackets belongs to $P_{k-2}(\C)$, the
cohomology class $j([\phi'](f))$ of this cocycle coincides with $[\phi](f)$.
\end{proof}
In \cite{BGKO}, (Th. 1.2.) it is proved that $[\phi']$ is an isomorphism. 

We are now ready to prove
\begin{thm}\label{ManinWp} Suppose that the class of $f \in S_k^!$ in $S_k^!/D^{k-1}M^!_{2-k}$ is an eigenclass of the Hecke operators. Let $K_f$ denote the field generated by the Fourier coefficients of $f$.Then there are $\omega^{\pm}(f) \in \C$ such that
$$L^*_f(j) \in \omega^+(f)K_f, \, \, \,  \text{for odd $j \in \{2, k-2\}$ and} \, \,  L^*_f(j) \in \omega^-(f)K_f, \, \, \,  \text{for even $j \in \{2, k-2\}$}.$$
\end{thm}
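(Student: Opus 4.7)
The strategy is to combine two independent pieces of information about the parabolic cocycle $\sigma'_f$ defined in Section~\ref{alquest}: a $K_f$-rational structure on its cohomology class, coming from the commutative diagram~\eqref{diag} together with the isomorphism $[\phi']$ of~\cite{BGKO}; and an explicit formula expressing $\sigma'_f(S)(\zeta)$ as a linear combination of the critical values $L^*_f(j)$, analogous to Theorem~\ref{Lperiod}.

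For the cohomological ingredient, the assumption that the class of $f$ in $S_k^!/D^{k-1}M_{2-k}^!$ is a Hecke eigenclass with eigenvalues in $K_f$, combined with the Hecke-equivariance of $[\phi']$ and its definition over $\Q$, forces $[\sigma'_f] \in H^1_{\mathrm{par}}(\G, P_{k-2}(\C))$ to lie in a Hecke eigenspace defined over $K_f$. Decomposing under $F_\infty$, each of the two components of a fixed Hecke eigensystem in $H^{1,\pm}_{\mathrm{par}}(\G, P_{k-2}(K_f))$ is one-dimensional over $K_f$ (the parabolic Eichler--Shimura decomposition in the weakly holomorphic setting). Hence there exist $\omega^\pm(f)\in\C$ and cocycles $\tau^\pm_f$ with $K_f$-rational coefficients, with $\tau^+_f(S)$ even and $\tau^-_f(S)$ odd in $\zeta$, such that
\[ \sigma'_f \;=\; \omega^+(f)\,\tau^+_f \;+\; \omega^-(f)\,\tau^-_f \;+\; c\,\psi_0 \]
for some $c\in\C$. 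By~\eqref{cbou}, $\psi_0(S)(\zeta)=\zeta^{k-2}-1$, so the coefficient of $\zeta^l$ in $\sigma'_f(S)(\zeta)$ lies in $\omega^+(f)K_f$ whenever $l$ is even and in $\omega^-(f)K_f$ whenever $l$ is odd, provided $0<l<k-2$; the endpoints $l=0,\,k-2$ can be shifted arbitrarily by the coboundary $c\psi_0$.

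For the analytic ingredient, starting from $\sigma'_f(S)(\zeta)=v'_f(-1/\zeta)\zeta^{k-2}-v'_f(\zeta)$, expand $(w-\zeta)^{k-2}$ in the definition of $v'_f$ binomially, apply the Fricke substitution $w\mapsto -1/w$ via the modularity $f(-1/w)(-w)^{-k}=f(w)$ (which separately governs $\tilde f$ and $\mathring f$), and set $w=it$ to convert each resulting integral into a regularised Mellin transform of the kind appearing in the definition of $L^*_f$ in Section~\ref{Lfun}. Collecting terms yields an identity of the form
\[ \sigma'_f(S)(\zeta) \;=\; \sum_{l=0}^{k-2} b_l\,L^*_f(l+1)\,\zeta^l \]
with explicit nonzero $b_l\in\Q(i)$---the weakly holomorphic analogue of Theorem~\ref{Lperiod}. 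Combining the two ingredients, for each $j\in\{2,\dots,k-2\}$ the coefficient of $\zeta^{j-1}$ in $\sigma'_f(S)$ equals $b_{j-1}L^*_f(j)$ and simultaneously lies in $\omega^{\pm}(f)K_f$ according to the parity of $j-1$; since $j$ and $j-1$ have opposite parity, this yields $L^*_f(j)\in\omega^+(f)K_f$ for odd $j$ and $L^*_f(j)\in\omega^-(f)K_f$ for even $j$, after absorbing the factor $b_{j-1}\in\Q(i)^\times$ into the period (possibly adjusting $\omega^\pm(f)$ by a $\Q(i)^\times$-scalar). The boundary cases $j=1,\,k-1$ escape the argument precisely because the coefficients of $\zeta^0$ and $\zeta^{k-2}$ are the ones affected by the coboundary $c\psi_0$.

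The main obstacle is the analytic step: one must carry out the Fricke change of variable and the Mellin transformation in the presence of the exponentially growing principal part $\mathring f$, which forces the regularised interpretation of the integrals from Section~\ref{Lfun} through incomplete Gamma functions. One must also verify that $b_l\neq 0$ for every $l\in\{1,\dots,k-3\}$, so that $K_f$-rationality of the coefficients of $\sigma'_f(S)$ genuinely transfers to $K_f$-rationality of the $L^*_f(l+1)$, and reconcile the $\Q(i)$-valued coefficients $b_l$ with the scalar periods $\omega^\pm(f)\in\C$ that appear in the statement.
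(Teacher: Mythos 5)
Your proposal follows essentially the same route as the paper's proof: the $K_f$-rational Hecke eigenspace structure on $H^1_{\mathrm{par}}(\G,P_{k-2})$ obtained from the isomorphism $[\phi']$ together with the $F_\infty$-eigenspace decomposition, combined with the binomial/Mellin expansion of $\sigma'_f(S)$ into critical values of $L^*_f$, with the coboundary $\psi_0$ explaining why $j=1$ and $j=k-1$ are excluded. The one issue you flag --- the nonvanishing of the coefficients $b_l$ needed to transfer rationality from the cocycle coefficients to the $L$-values --- is a genuine point, but the paper's own proof also leaves it implicit, so your attempt is at least as complete as the original.
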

\begin{proof}
In \cite{G}, it is proved that the eigenspace of the class of $f$ in $S_{k, \Q}^!/D^{k-1}M_{2-k, \Q}^! \otimes_{\Q} \C$ is two-dimensional. It is defined over $K_f$. We let $V_f^{\text{deR}} \subset S_{k, \Q}^!/D^{k-1}M_{2-k, \Q}^! \otimes_{\Q} K_f $ denote the Hecke eigenspace generated over $K_f$ by $f$ and let $V_f^B$ be the corresponding eigenspace in $H^1(\Gamma, P_{k-2}(\Q)) \otimes_{\Q} K_f$. (We follow the notation of \cite{BH} to indicate the de Rham- (resp. Betti)-cohomological origin of those eigenspaces.) The space $V_f^B$ is two dimensional and defined over $K_f$ and therefore, so is the corresponding eigenspace $V^W_f$ in $H^1_{\text{par}}(\Gamma, P_{k-2}(\Q)) \otimes_{\Q} K_f$. 
It decomposes into invariant and anti-invariant eigenspaces with respect to the real Frobenius:
$V^W_f=V^{W, +}_f \oplus V^{W, -}_f.$
Further, by Th. \ref{BHES}, the map 
$$[\phi']: V_f^{\text{deR}} \otimes_{K_f} \C \xrightarrow{\sim} V_f^W \otimes_{K_f} \C$$
is a canonical isomorphism. 
Therefore, for some $\omega^{\pm}(f) \in \C,$ and some $\phi^{\pm}(f) \in V_f^{W, \pm} \subset H^{1, \pm}_{\text{par}}(\Gamma, P_{k-2}(K_f))$, we have,
$$[\phi'(f)]=\omega^{+}(f)\phi^+(f)+\omega^-(f) \phi^-(f).$$
Thus, for an even $P^+ \in Z_{\text{par}}^1(\Gamma, P_{k-2}(K_f))$,  an odd $P^- \in Z_{\text{par}}^1(\Gamma, P_{k-2}(K_f))$ 
and a $c_f \in \C,$ 
$$\phi'(f)=\omega^+(f)P^++\omega^-(f)P^-+c_f \psi_0.$$
(Recall that $\psi_0$ is defined by \eqref{cbou}). This gives
\begin{equation}\label{finform}
\phi'(f)(S)=\omega^+(f)P^+(S)+\omega^-(f)P^-(S)+c_f \underset{2-k, 0}{||}(S-1).
\end{equation} 
On the other hand, it is easy to see from  our definition of L-function and an application of the binomial formula to \eqref{rE}, that the coefficient of $z^{j-1}$ in $\phi'(f)(S)$ is a multiple of $L^*_f(j)$ by an element of $\Q[i]$. By comparing,  in \eqref{finform}, the coefficients of $z^{j-1}$, for $j$ odd (resp. even) other than $j=1,$ and $j=k-1$, we deduce the assertion.
\end{proof}

\end{document}